\theoremstyle{plain}
\newtheorem{theorem}{Theorem}[section]
\newtheorem{definition}[theorem]{Definition}
\newtheorem{lemma}[theorem]{Lemma}
\newtheorem{proposition}[theorem]{Proposition}
\newtheorem{corollary}[theorem]{Corollary}
\newtheorem{remark}[theorem]{Remark}
\newtheorem{example}[theorem]{Example}
\newtheorem{question}[theorem]{QUESTION}
\newtheorem{remark-question}[section]{Remark-Question}
\newtheorem{conjecture}[section]{Conjecture}
\newcommand\R{{\mathbb R}}
\newcommand\SU{{\rm SU}}
\newcommand\db{{\bar{\partial}}}
\begin{document}
\title[Strong K\"ahler with torsion  structures]{Strong K\"ahler with torsion  structures from almost contact manifolds}

\subjclass[2000]{53C55, 53C15, 
22E25, 53C26}
\author{Marisa  Fern\'andez}
\address[Fern\'andez]{Universidad del Pa\'{\i}s Vasco\\
Facultad de Ciencia y Tecnolog\'{\i}a, Departamento de Matem\'aticas\\
Apartado 644, 48080 Bilbao\\ Spain} \email{marisa.fernandez@ehu.es}

\author{Anna Fino}
\address[Fino]{Dipartimento di Matematica\\
 Universit\`a di Torino\\
Via Carlo Alberto 10\\
10123 Torino, Italy} \email{annamaria.fino@unito.it}

\author{Luis Ugarte}
\address[Ugarte]{Departamento de Matem\'aticas\,-\,I.U.M.A.\\
Universidad de Zaragoza\\
Campus Plaza San Francisco\\
50009 Zaragoza, Spain} \email{ugarte@unizar.es}

\author{Raquel Villacampa}
\address[Villacampa]{Centro Universitario de la Defensa\\
     Academia General Militar\\
     Crta. de Huesca s/n\\
     50090 Zaragoza, Spain.} \email{raquelvg@unizar.es}
\maketitle

\begin{abstract}
For an almost contact metric
manifold $N$, we  find conditions for which either  the total space  of
an $S^1$-bundle over $N$ or the  Riemannian cone over $N$  admits  a
strong K\"ahler with torsion  (SKT) structure. In this way we construct new $6$-dimensional SKT manifolds.  Moreover,  we study the geometric structure induced on a
hypersurface of an SKT manifold, and use such structures to
construct new SKT manifolds via appropriate evolution equations. Hyper-K\"ahler with torsion (HKT) structures on the total space of an $S^1$-bundle over   manifolds  with three almost contact structures are also studied.
\end{abstract}


\section{Introduction}

 On any Hermitian manifold  $(M^{2n}, J, h)$ there exists a unique Hermitian connection
 $\nabla^B$ with totally skew-symmetric torsion,  called in the literature as
 Bismut connection~\cite{Bi}.  The torsion $3$-form $h(X,T^B(Y,Z))$ of $\nabla^B$
 can  be identified with the $3$-form
$$
- J dF (\cdot, \cdot, \cdot) = -  dF (J \cdot, J \cdot, J \cdot),
$$
where  $F (\cdot, \cdot) = h ( \cdot, J \cdot)$ is the fundamental
$2$-form  associated to the Hermitian structure $(J, h)$.

 Hermitian structures with closed $JdF$ are called {\em strong K\"ahler with torsion} (shortly  {\em SKT})
or also {\em pluriclosed}~\cite{Eg}.  Since $\partial\db$ acts as
$\frac12 dJd$ on forms of bidegree $(1,1)$, the latter condition is
equivalent to $\partial\db F=0$.   SKT structures have been recently
studied by many authors and they have also applications in type II
string theory and in 2-dimensional supersymmetric
$\sigma$-models~\cite{GHR, Str, IP}.

The  class of SKT metrics includes of course the K\"ahler metrics,
but as in~\cite{FPS}  we are interested on non-K\"ahler geometry, so
for SKT metrics we will mean Hermitian metrics $h$  such that  its
fundamental $2$-form $F$ is $\partial\db$-closed but not $d$-closed.

Gauduchon in~\cite{Ga} showed that  on a compact complex surface an
SKT metric can be found  in the conformal class of any given
Hermitian metric, but in higher dimensions the situation is more
complicated.

SKT structures on $6$-dimensional nilmanifolds, i.e. on compact
quotients of nilpotent Lie groups by discrete subgroups, were
classified in~\cite{FPS,U}. Simply-connected  examples of
$6$-dimensional  SKT manifolds have been   found in~\cite{GGP} by
using torus bundles and recently Swann in~\cite{Sw}  has reproduced
them via  the twist construction, by  extending them to higher
dimensions, and finding new  other compact simply-connected SKT
manifolds. Moreover,  in~\cite{FT2} it has been showed that the SKT
condition is preserved by the blow-up construction.

The odd dimensional analog of  Hermitian structures are  given by
normal  almost contact metric structures.  Indeed,  on the product
$N^{2n + 1} \times \R$ of a  $(2n + 1)$-dimensional almost contact
metric manifold $N^{2n + 1}$ by  the real line $\R$
 it is possible to define a natural almost complex structure, which is integrable
 if and only if the almost contact metric structure on $N^{2n + 1}$ is normal~\cite{SH}.
 More in general, it is possible   to construct Hermitian  manifolds  starting from an
 almost contact  metric manifold $N^{2n + 1}$  by considering a principal fibre bundle $P$
 with  base space $N^{2n + 1}$ and structural group $S^1$, i.e.
 an $S^1$-bundle over $N^{2n+1}$ (see~\cite{Og}). Indeed, in~\cite{Og} by  using
 the almost contact metric structure    on $N^{2n + 1}$  and the connection
$1$-form  $\theta$,  Ogawa  constructed  an almost Hermitian
structure $(J, h)$ on $P$ and found conditions for which  $J$ is
integrable and $(J, h)$ is K\"ahler.

In  Section \ref{S1bundlessection} we determine conditions for which
in general an $S^1$-bundle over an almost contact  metric $(2n +
1)$-dimensional manifold  $N^{2n + 1}$ is SKT
(Theorem~\ref{non-t-circle-bundle}).  We study  the particular case
when $N^{2n + 1}$ is  quasi-Sasakian, i.e.   it has an  almost
contact metric structure  for which the  fundamental form is closed
(Corollary~\ref{S1bundle-quasiSasaki}). In  this way we are able to
construct some new $6$-dimensional SKT examples, starting from
$5$-dimensional  quasi-Sasakian  Lie algebras and also from Sasakian
ones.

A Sasakian structure can be also  seen as the analog in odd
dimensions of a K\"ahler structure. Indeed, by~\cite{BG} a
Riemannian manifold  $(N^{2n + 1}, g)$  of odd dimension $2n + 1$
admits a compatible Sasakian structure if and only if the Riemannian
cone $N^{2n + 1} \times \R^+$ is K\"ahler. In
Section~\ref{sectioncones} we study which conditions has to satisfy
the compatible almost contact metric structure on a Riemannian
manifold $(N^{2n + 1}, g)$ in order to the Riemannian cone $N^{2n +
1} \times \R^+$ to be SKT (Theorem~\ref{Riemcone}). An example of an
SKT manifold constructed as Riemannian cone is provided and     the
particular case that the Riemannian cone is $6$-dimensional is
considered in Section~\ref{secconesSKTSU(3)}. This case  is
interesting since one can   impose that
 the SKT structure   is in addition an SKT
SU(3)-structure and one  can find relations with the
SU(2)-structures studied by Conti and Salamon in~\cite{CS}.

In Section~\ref{inducedhyper} we study   the geometric structure
induced naturally on any oriented hypersurface $N^{2n + 1}$  of a
$(2n + 2)$-dimensional  manifold $M^{2n + 2}$  carrying  an SKT
structure and in  Section~\ref{evolutioneq} we
 use such structures to
construct new SKT manifolds via appropriate evolution
equations~\cite{H,CS}, starting from  a $5$-dimensional manifold
endowed with an SU(2)-structure (Theorem~\ref{evolution-eqs-3}).

A good quaternionic analog of K\"ahler geometry is given by  {\em
hyper-K\"ahler with torsion}  (shortly {\em HKT}) geometry. An HKT
manifold is a hyper-Hermitian manifold $(M^{4n}, J_1, J_2, J_3, h)$
admitting a hyper-Hermitian connection with totally skew-symmetric
torsion, i.e. for which the three Bismut connections associated to
the three Hermitian structures $(J_r, h)$, $r =1,2,3$,  coincide.
This geometry was introduced by Howe and Papadopoulos \cite{HP} and
later studied for instance  in~\cite{GP, FG, BDV, BF, Sw}.

A particular interesting case is  when  the torsion $3$-form of such
hyper-Hermitian connection is closed. In this case the HKT manifold
is  called {\em {strong}}.

In the last section we  find conditions for which an $S^1$-bundle
over a $(4n + 3)$-dimensional manifold endowed with three almost
contact metric structures is HKT and in particular when it is strong
HKT (Theorem~\ref{HKTcircle-bundle}).

\section{SKT structures arising from $S^1$-bundles} \label{S1bundlessection}

  Consider a $(2n + 1)$-manifold $N^{2n + 1}$ with an almost
contact metric structure  $(I, \xi, \eta, g)$, that is, $I$ is a
tensor field of type $(1,1)$, $\xi$ is a vector field, $\eta$ is a
$1$-form and $g$ is a Riemannian metric on $N^{2n + 1}$ satisfying
the following conditions:
$$
I^2 = -Id + \eta\otimes\xi,  \quad  \eta(\xi) = 1,  \quad
g(IU,IV)=g(U,V)-\eta(U) \eta(V),
$$
for any vector fields $U$, $V$ on $N^{2n + 1}$. Denote by $\omega$
the fundamental $2$-form
 of $( I, \xi, \eta, g)$, i.e. $\omega$ is the $2$-form on $N^{2n + 1}$ given by
 $$
 \omega(.,.) = g(.,I.).
 $$
 Given the tensor field $I$ consider its Nijenhuis torsion $[I, I]$ defined by
 \begin{equation} \label{NijenhuistensorI}
 [I, I] (X, Y) = I^2 [X, Y] + [IX, IY] - I[IX, Y] - I[X, IY].
\end{equation}
 On the product $N^{2n + 1} \times \R$ it is possible to define a natural almost complex structure  $$
 J \left( X, f \frac{d} {dt} \right) = \left (I X + f \xi, - \eta(X) \frac{d}{dt} \right),
 $$
 where $f$ is a ${\mathcal C}^{\infty}$-function on $N^{2n + 1} \times \R$ and $t$ is the coordinate on $\R$.

We recall the following
\begin{definition} \cite{SH} An almost
contact metric structure  $(I, \xi, \eta, g)$ on $N^{2n + 1}$ is
called {\em normal} if  the almost complex structure $J$ on  $N^{2n
+ 1} \times \R$ is integrable, or equivalently if
$$
[I, I] (X, Y)  + 2 d \eta (X, Y) \xi =0,
$$
for any vector fields $X,Y$ on $N^{2n + 1}$.
\end{definition}

By \cite[Lemma 2.1]{Blair} for a normal almost contact metric
structure  $(I, \xi, \eta, g)$, one has that $i_{\xi} d \eta =0$.

\begin{remark} \label{deta}  {\rm The normality of the almost contact structure  implies  also that $I d \eta = d \eta$. Indeed,     we have that $d (\eta  -  i dt) = d \eta$ has no $(0,2)$-part and therefore it has also no $(2,0)$-part  since $d \eta$ is real. Thus $J d \eta =   d  \eta$, but  we have also that $J d \eta = I d \eta$ since $i_{\xi} d \eta =0$. }
\end{remark}

\medskip

We recall that a Hermitian manifold $(M, J, h)$ is SKT if and only if the $3$-form  $J dF$ is closed, where $F$ is the fundamental $2$-form of $(J, h)$. In the paper
we will use the convention that  $J$ acts on
 $r$-forms $\beta$ as  $$(J  \beta) (X_1,  \ldots,  X_r) =   \beta
(J X_1, \ldots,  J X_r),$$ for any vector fields $X_1,\ldots,X_r$.

We now show conditions for which in general an $S^1$-bundle over an
almost contact metric  $(2n+1)$-dimensional manifold is SKT.

Let $(N^{2n+1}, I, \xi, \eta)$ be a $(2n+1)$-dimensional  almost contact manifold, and
let $\Omega$ be a closed $2$-form on $N^{2n+1}$ which represents an
integral cohomology class on $N^{2n+1}$. From the well-known result of
Kobayashi \cite{Kob}, we can consider the circle bundle $S^1
\hookrightarrow P \to N^{2n+1}$, with connection $1$-form $\theta$ on $P$
whose curvature form is $d\theta = \pi^*(\Omega)$, where $\pi: P \to
N^{2n+1}$ is the projection.

 By using the almost contact structure $(I, \xi, \eta)$ and the connection
$1$-form $\theta$, one can define an almost complex structure $J$ on $P$ as
 follows (see \cite{Og}). For  any
right-invariant vector field  $X$ on $P$,
 $J X$  is given by
\begin{equation} \label{acxS1}
\begin{array} {l}
\theta (JX) = - \pi^*( \eta (\pi_* X)),\\
\pi_* (JX) = I (\pi_* X)  + \tilde \theta (X) \xi,
\end{array}
\end{equation}
where $\tilde \theta (X)$ is the unique function on $N^{2n+1}$ such that
\begin{equation} \label{acxS2}
\begin{array} {l}
\pi^* \tilde  \theta (X) =  \theta (X).
\end{array}
\end{equation}
The above definition can be extended  to arbitrary vector fields $X$
on $P$, since $X$ can be written in the form
$$
X = \sum_j f_j X_j,
$$
with $f_j$ smooth functions on $P$ and $X_j$ right-invariant vector
fields. Then $JX = \sum_j f_j  J X_j$.

In \cite{Og} it has been showed that if  $(N^{2n+1}, I, \xi, \eta)$
is normal, then the almost complex structure $J$ on $P$ defined by
\eqref{acxS1} is integrable if and only if $d \theta$ is
$J$-invariant, that is,
$$
J(d  \theta) = d  \theta,
$$
or equivalently
$$
d  \theta (J X, Y) + d  \theta (X, JY) = 0,
$$
for any vector fields  $X, Y$ on $P$, i.e. $d  \theta$ is a complex
$2$-form on $P$ having bidegree $(1,1)$ with respect to $J$.

\bigskip

 In terms of the $2$-form $\Omega$ whose lifting to $P$
is the curvature of the circle bundle $S^1 \hookrightarrow P \to N^{2n+1}$,
the previous condition means that $\Omega$ is $I$-invariant, i.e.
$I(\Omega)=\Omega$, and therefore $i_{\xi} \Omega = 0$.

If $\{ e^1, \ldots, e^{2n}, \eta \}$ is an adapted coframe on a
neighborhood $U$ on $N^{2n + 1}$, i.e. such that
$$
Ie^{2j-1} = - e^{2j}, \quad   Ie^{2j} =  e^{2j-1},  \quad 1\leq j \leq n,
$$
then we can take $\{ \pi^* e^1, \ldots,
\pi^* e^{2n}, \pi^* \eta, \theta \}$ as a
 coframe in $\pi^{- 1} (U)$. By  using the coframe  $\{ \pi^* e^1, \ldots, \pi^* e^{2n} \}$,
 we may write
$$
d \theta =  \pi^* \alpha + \pi^*\beta \wedge  \pi^* \eta,
$$
where $\alpha$ is a $2$-form  in $\bigwedge^2 < e^1, \ldots,
e^{2n}>$ and $\beta \in \bigwedge^1  < e^1, \ldots, e^{2n}>$.

Next, suppose that $N^{2n+1}$ has a normal almost contact metric
structure $(I, \xi, \eta, g)$. We consider a principal $S^1$-bundle
$P$ with base space $N^{2n+1}$ and connection $1$-form $\theta$, and
endow  $P$  with the almost complex structure $J$ (associated to
$\theta$) defined by~\eqref{acxS1}.  Since $N^{2n+1}$ has a
Riemannian metric $g$, a Riemannian metric $h$ on $P$ compatible
with $J$ (see \cite{Og})
 is given by
\begin{equation} \label{acxS2-1}
\begin{array} {l}
 h(X, Y) = \pi^* g( \pi_* X,  \pi_* Y) +  \theta(X)  \theta(Y),
\end{array}
\end{equation}
for any  right-invariant vector fields $X, Y$. The above definition
can be extended  to any vector field on $P$.

\begin{theorem}\label{non-t-circle-bundle}
Let $(N^{2n+1}, I, \xi, \eta, g)$ be a $(2n+1)$-dimensional  almost
contact  metric manifold and let $\Omega$ be a closed $2$-form on
$N^{2n+1}$ which represents an integral cohomology class. Consider
the circle bundle $S^1 \hookrightarrow P \to N^{2n+1}$ with
connection $1$-form $\theta$ whose curvature form is $d\theta =
\pi^*(\Omega)$, where $\pi: P \to N^{2n+1}$ is the projection. Then,
the almost Hermitian structure $(J, h)$ on $P$, defined by
 \eqref{acxS1}  and  \eqref{acxS2-1}, is {\rm SKT} if and only  if
 $( I, \xi, \eta, g)$ is normal,  $d\theta$ is $J$-invariant and such that
 \begin{equation} \label{acxS2-2}
\begin{array} {l}
d(\pi^* (I(i_{\xi} d \omega))) =0,\\
d(\pi^* (I(d \omega)- d \eta \wedge \eta)) = \left(- \pi^* (I(i_{\xi} d \omega))+
\pi^* \Omega\right)\wedge  \pi^* \Omega,
\end{array}
\end{equation}
 where $\omega$ denotes the  fundamental form of the almost contact metric  structure  $(I, \xi, \eta, g)$.
\end{theorem}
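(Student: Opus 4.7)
The plan is, first, to account for integrability of $J$: since SKT implies Hermitian, $J$ must be integrable, and by the Ogawa theorem recalled just above the statement this is equivalent to normality of $(I,\xi,\eta,g)$ and $J$-invariance of $d\theta$. Hence these two hypotheses are forced on the ``only if'' side, and on the ``if'' side they give us that $(J,h)$ is Hermitian so that the SKT condition $d(J\,dF)=0$ makes sense. Under normality I will use $i_\xi d\eta=0$ (Blair's lemma), $I(d\eta)=d\eta$ (Remark~\ref{deta}), and $I\eta=0$ (from $I^2=-\mathrm{Id}+\eta\otimes\xi$ and $I\xi=0$); and $J$-invariance of $d\theta$ translates into $I(\Omega)=\Omega$ together with $i_\xi\Omega=0$.

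Second, I would write out the fundamental $2$-form explicitly. Plugging \eqref{acxS1} and \eqref{acxS2-1} into $F(X,Y)=h(X,JY)$ on right-invariant vector fields yields
\[
F \,=\, \pi^*\omega + \pi^*\eta\wedge\theta,
\qquad
dF \,=\, \pi^*d\omega + \pi^*d\eta\wedge\theta - \pi^*\eta\wedge\pi^*\Omega,
\]
where $d\theta=\pi^*\Omega$ was used in the differentiation.

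Third, I would compute $J\,dF$. Beyond the basic identities $J\theta=-\pi^*\eta$ and $J\pi^*\eta=\theta$, the key lemma is the formula
\[
J\pi^*\alpha \,=\, \pi^*(I\alpha) + \theta\wedge\pi^*\bigl(I(i_\xi\alpha)\bigr),
\]
valid for any form $\alpha$ on $N^{2n+1}$, which follows by decomposing $\alpha=(\alpha-\eta\wedge i_\xi\alpha)+\eta\wedge i_\xi\alpha$ into horizontal and vertical parts and invoking $I\eta=0$. Applying this termwise in the expression for $dF$, and simplifying by means of $I(d\eta)=d\eta$, $I(\Omega)=\Omega$, $i_\xi d\eta=0$ and $i_\xi\Omega=0$, produces
\[
J\,dF \,=\, \pi^*\bigl(I(d\omega) - d\eta\wedge\eta\bigr) + \theta\wedge\pi^*\bigl(I(i_\xi d\omega) - \Omega\bigr).
\]

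Finally, I would differentiate. Using $d\theta=\pi^*\Omega$, $d\Omega=0$, and $d(d\eta\wedge\eta)=d\eta\wedge d\eta$, one obtains
\[
d(J\,dF) \,=\, \pi^*\!\left[d\bigl(I(d\omega)-d\eta\wedge\eta\bigr)+\bigl(I(i_\xi d\omega)-\Omega\bigr)\wedge\Omega\right] - \theta\wedge\pi^* d\bigl(I(i_\xi d\omega)\bigr).
\]
Any form on $P$ of the shape $\pi^*\gamma+\theta\wedge\pi^*\delta$ vanishes if and only if both $\gamma$ and $\delta$ vanish on $N^{2n+1}$ (since $\theta$ is not a pullback and $\pi^*$ is injective on forms), so $d(J\,dF)=0$ is equivalent to the pair of identities appearing in \eqref{acxS2-2}. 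The main bookkeeping obstacle is controlling the $J$-action on pullback forms that are not horizontal with respect to $\xi$, which is exactly what the decomposition above combined with the normality identities resolves.
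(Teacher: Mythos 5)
Your proposal is correct and follows essentially the same route as the paper's proof: compute $F=\pi^*\omega+\pi^*\eta\wedge\theta$, apply the decomposition formula for $J$ on pullbacks (which the paper establishes for $d\omega$ and $d\eta$ via a local adapted coframe, exactly your key lemma), arrive at the same expression for $JdF$, differentiate, and split $d(JdF)=0$ into its horizontal and $\theta$-components. The only cosmetic difference is that you state the lemma $J\pi^*\alpha=\pi^*(I\alpha)+\theta\wedge\pi^*(I(i_\xi\alpha))$ for arbitrary forms at once rather than term by term.
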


\begin{proof}
As we mentioned previously, a result of Ogawa \cite{Og}  asserts that the
almost complex structure $J$ is integrable if and only if $(I, \xi, \eta, g)$
is normal and $J(d\theta) = d\theta$. Thus $(J, h)$ is SKT if and
only if the $3$-form $J dF$ is closed. By using the first equality
of \eqref{acxS1}, we have that  the fundamental  $2$-form $F$ on $P$ is
$$
\begin{array}{lcl}
F (X, Y) &=& h(X, JY) =  \pi^*g ( \pi_* X,  \pi_* JY) +  \theta (X)
\theta(JY) \\[4pt]
&=& \pi^*g ( \pi_* X,  \pi_* JY) -  \theta (X) \pi^* \eta (\pi_*
Y).
\end{array}
$$
Therefore, taking into account that we are working with a circle
bundle, and so its fibre is $1$-dimensional, we have
$$
F = \pi^* \omega + \pi^* \eta \wedge \theta.
$$
Thus,
$$
d F = \pi^*(d \omega) + \pi^*(d \eta) \wedge  \theta - \pi^* \eta
\wedge d  \theta,
$$
and
\begin{equation} \label{acxS3}
\begin{array}{l}
JdF  =  J(\pi^*(d \omega)) - J( \pi^*(d \eta)) \wedge  \pi^* \eta -
\theta \wedge d  \theta,
\end{array}
\end{equation}
since $J(\pi^* \eta)= \theta$ and $J$ is integrable, so $J(d
\theta)=d  \theta$.

Moreover, we have
\begin{equation} \label{acxS4}
\begin{array} {l}
J(\pi^*(d \omega))= \pi^*(I(d \omega)) + \pi^* ( I(i_{\xi} d \omega)
) \wedge  \theta.
\end{array}
\end{equation}

Indeed, locally and in terms of the adapted basis $\{ e^1, \ldots,e^{2n+1} \}$  such that
$$
Ie^{2j-1} = - e^{2j}, \quad  1\leq j \leq n,  \quad I e^{2n+1} =0,
\quad \eta = e^{2n+1},
$$
we can write
$$
d \omega =  \alpha + \beta \wedge \eta,
$$
where the local forms $\alpha \in \Lambda^3 <e^1, \ldots, e^{2n}>$
and $\beta \in \Lambda^2 <e^1, \ldots, e^{2n}>$ are generated only
by $e^1, \ldots, e^{2n}$. Furthermore,  we have
$$
 I \alpha = I (d \omega),  \quad  \beta = i_{\xi} d \omega.
 $$
Thus,
$$
J(\pi^*(d \omega))= J( \pi^*(\alpha)) +  J(\pi^*(i_{\xi} d \omega))
\wedge \theta.
$$
Now, by using~\eqref{acxS1}  and~\eqref{acxS2}, we see that $J(
\pi^*(\alpha)) = \pi^*(I \alpha)$ and $J(\pi^*(i_{\xi} d \omega)) =
\pi^*(I (i_{\xi} d \omega))$, which proves~\eqref{acxS4}. As a
consequence of Remark~\ref{deta}  we have
\begin{equation} \label{acxS5}
\begin{array}{l}
J(\pi^*(d \eta))= \pi^*(I(d \eta))- \pi^*(I(i_{\xi} d \eta)) \wedge
\theta =  \pi^*(d \eta),
\end{array}
\end{equation}
since $i_{\xi} d \eta =0$ and $I d \eta = d \eta$.

By using~\eqref{acxS4} and~\eqref{acxS5}  we get
\begin{equation} \label{expressionJdF}
J dF =  \pi^*(I(d \omega)) + \pi^*(I(i_{\xi} d \omega))\wedge
\theta - \pi^*(d \eta) \wedge \pi^* \eta  - \theta \wedge d  \theta.
\end{equation}
Therefore
$$
\begin{array}{lll}
d (JdF)& =&  d\left( \pi^*(I(d \omega))\right) + d(\pi^*\{I(i_{\xi}
d \omega)\}) \wedge  \theta
+\pi^*(I(i_{\xi} d \omega)) \wedge  d \theta\\[4pt]
&&- d(\pi^*(d \eta))\wedge \pi^* \eta - \pi^*(d \eta)\wedge d \pi^*
\eta - d \theta \wedge d  \theta.
\end{array}
$$
Consequently, $d (JdF) = 0$ if and only if
$$
d(\pi^*(I(i_{\xi} d \omega))) =0,
$$
and
$$
d( \pi^*(I(d \omega)- d \eta \wedge \eta)) = \left(\pi^*(- I(i_{\xi}
d \omega)) + d \theta\right)\wedge d \theta,
$$
which completes the proof.
\end{proof}

We recall that an almost  contact  metric manifold  $(N^{2n  + 1},
I, \xi, \eta, g)$ is {\em quasi-Sasakian} if it  is  normal and its
fundamental form $\omega$  is closed.  If, in particular,
$d\eta=\alpha\,\omega$, then the almost contact metric  structure is
called {\em $\alpha$-Sasakian}.  When $\alpha=-2$, the structure is
said to be {\em Sasakian}.

By \cite[Theorem 8.2]{FI2} an almost contact metric manifold
$(N^{2n+1}, I, \xi, \eta, g)$ admits a  connection $\nabla^c$
preserving the almost contact metric structure and with totally
skew-symmetric torsion tensor  if and only if the Nijenhuis tensor
of $I$, given by \eqref{NijenhuistensorI},   is skew-symmetric and
$\xi$ is a Killing vector field. Moreover, this connection is
unique.

Then, in particular on any quasi-Sasakian manifold $(N^{2n + 1}, I,
\xi, \eta, g)$ there exists a unique connection $\nabla^c$   with
totally skew-symmetric torsion such that
$$
\nabla^c I =0,  \quad \nabla^c g =0, \quad  \nabla^c  \eta =0.
$$
Such  connection $\nabla^c$  is uniquely determined  by
\begin{equation} \label{almostcontactconnection}
g (\nabla^c_X  Y, Z) = g(\nabla^g_X Y, Z)  + \frac{1}{2} (d \eta \wedge \eta)
(X, Y, Z),
\end{equation}
where $\nabla^g$ denotes the Levi-Civita connection and $\frac{1}{2} (d \eta \wedge \eta)$ is the torsion $3$-form of $\nabla^c$.

\begin{corollary} \label{S1bundle-quasiSasaki}
Let $(N^{2n+1}, I, \xi, \eta, g)$ be a quasi-Sasakian
$(2n+1)$-manifold and let $\Omega$ be a closed $2$-form on
$N^{2n+1}$ which represents an integral cohomology class. Consider
the circle bundle $S^1 \hookrightarrow P \to N^{2n+1}$ with
connection $1$-form $\theta$ whose curvature form is $d\theta =
\pi^*(\Omega)$, where $\pi: P \to N^{2n+1}$ is the projection. Then,
the almost Hermitian structure $(J, h)$ on $P$, defined by
 \eqref{acxS1}  and  \eqref{acxS2-1}, is {\rm SKT} if and only  if $\Omega$ is $I$-invariant, $i_{\xi} \Omega=0$ and
 \begin{equation} \label{bundlequasiSasak}
\begin{array} {l}
d \eta \wedge d\eta =  - \Omega \wedge \Omega.
\end{array}
\end{equation}
Moreover,  the Bismut connection $\nabla^B$  of $(J, h)$  on $P$  and the  connection $\nabla^c$ on $N$  given by \eqref{almostcontactconnection} are related by
\begin{equation} \label{bismutnablac}
h (\nabla^B_X  Y,  Z)  =  \pi^* g( \nabla^c_{\pi_* X} \pi_* Y, \pi_* Z),
\end{equation}
for any vector fields $X, Y, Z \in {\mbox {Ker}} \,  \theta$.
\end{corollary}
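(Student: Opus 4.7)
The plan is to apply Theorem~\ref{non-t-circle-bundle} directly, using the quasi-Sasakian hypothesis to simplify the conditions~\eqref{acxS2-2}, and then to derive the connection identity~\eqref{bismutnablac} by combining the O'Neill formulas for the Riemannian submersion $\pi\colon(P,h)\to(N,g)$ with the standard torsion formula for the Bismut connection.

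For the first part, since $(N^{2n+1},I,\xi,\eta,g)$ is quasi-Sasakian, the fundamental form $\omega$ is closed; hence both $I(d\omega)$ and $i_\xi d\omega$ vanish. As noted in the discussion preceding Theorem~\ref{non-t-circle-bundle}, the $J$-invariance of $d\theta$ is equivalent to $\Omega$ being $I$-invariant (which forces $i_\xi\Omega=0$ automatically, since $I\xi=0$). The first equation of~\eqref{acxS2-2} then holds trivially, while the second reduces to
$$
-\pi^{*}\!\big(d(d\eta\wedge\eta)\big)=\pi^{*}\Omega\wedge\pi^{*}\Omega.
$$
Using $d(d\eta\wedge\eta)=d\eta\wedge d\eta$, this is equivalent to $d\eta\wedge d\eta=-\Omega\wedge\Omega$ on $N^{2n+1}$. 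Together with the normality built into the quasi-Sasakian hypothesis, Theorem~\ref{non-t-circle-bundle} yields the first assertion.

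For the connection identity, I observe that by the construction of $h$ in~\eqref{acxS2-1} the map $\pi\colon(P,h)\to(N,g)$ is a Riemannian submersion with horizontal distribution $\ker\theta$. The O'Neill formulas then give, for horizontal lifts $X,Y,Z$ of vector fields on $N$, the relation
$$
h(\nabla^{h}_{X}Y,Z)=\pi^{*}g\big(\nabla^{g}_{\pi_{*}X}\pi_{*}Y,\pi_{*}Z\big),
$$
since the vertical part of $\nabla^{h}_{X}Y$ is annihilated by the horizontal $Z$. On the other hand, because the Bismut connection is metric with totally skew-symmetric torsion $3$-form equal to $-JdF$, one has
$$
h(\nabla^{B}_{X}Y,Z)=h(\nabla^{h}_{X}Y,Z)-\tfrac{1}{2}\,JdF(X,Y,Z).
$$

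The final step is to evaluate $JdF$ on a horizontal triple. Starting from the expression~\eqref{expressionJdF} with $d\omega=0$, the summands $\pi^{*}(I(d\omega))$ and $\pi^{*}(I(i_{\xi}d\omega))\wedge\theta$ vanish identically, and the remaining term $\theta\wedge d\theta$ is killed by horizontality of $X,Y,Z$. This leaves
$$
JdF(X,Y,Z)=-\pi^{*}(d\eta\wedge\eta)(X,Y,Z).
$$
Inserting this into the Bismut formula and comparing with the definition~\eqref{almostcontactconnection} of $\nabla^{c}$ produces precisely~\eqref{bismutnablac}. I expect the main obstacle to be the careful bookkeeping of sign and normalization conventions (between the $-JdF$ identification of the Bismut torsion and the $\tfrac{1}{2}$-convention used in the definition of $\nabla^{c}$), together with verifying cleanly that the vertical contribution in the O'Neill formula drops out when paired with the horizontal vector $Z$ rather than re-entering through the torsion correction.
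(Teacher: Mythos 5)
Your proposal is correct and follows essentially the same route as the paper: the first part is Theorem~\ref{non-t-circle-bundle} with $d\omega=0$ killing the $I(d\omega)$ and $i_{\xi}d\omega$ terms, and the second part combines the Bismut formula \eqref{bismutconnection} with the submersion identity $h(\nabla^h_XY,Z)=\pi^*g(\nabla^g_{\pi_*X}\pi_*Y,\pi_*Z)$ on horizontal vectors (the paper cites \cite[Lemma 3]{Og} where you invoke O'Neill) and the evaluation of $JdF$ from \eqref{expressionJdF}. The sign bookkeeping you flag does work out: on $\ker\theta$ one gets $-\tfrac12 JdF=+\tfrac12\,\pi^*(d\eta\wedge\eta)$, matching \eqref{almostcontactconnection}.
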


\begin{proof} Since $d \omega = 0$, if we impose the SKT condition,   by using the previous theorem,  we get the equation \eqref{bundlequasiSasak}.

The Bismut connection  $\nabla^B$ associated to the Hermitian
structure $(J, h)$ on $P$  is given by:
\begin{equation} \label{bismutconnection}
h (\nabla^B_X Y, Z) = h (\nabla^h_X Y, Z)  - \frac12\, dF (JX, JY,
JZ),
\end{equation}
for any vector fields $X, Y, Z$ on $P$,
where $\nabla^h$ is the Levi-Civita  connection associated to $h$.  Then,  for any $X, Y, Z$ in the kernel of $\theta$ we have
$$
h (\nabla^B_X  Y,  Z) =  \pi^* g(\nabla^h_{X} Y,  Z)  + \frac 12 (\pi^* (d \eta)
\wedge \pi^* \eta)  (X,  Y,  Z) .
$$
By \cite [Lemma 3] {Og}  and the definition of $\nabla^c$ we get
$$
h (\nabla^B_X  Y,  Z)
= \pi^* g(\nabla^g_{\pi_* X} \pi_* Y, \pi_* Z) +   \frac {1}{2} (\pi^ *(d \eta)
\wedge \pi^* \eta)  (X,   Y, Z) = \pi^* g( \nabla^c_{\pi_* X} \pi_* Y, \pi_* Z),
$$
for any $X, Y, Z$ in the kernel of $\theta$.
\end{proof}

\begin{remark}{\em
If the structure $(I, \xi, \eta, g)$ is $\alpha$-Sasakian, equation
\eqref{bundlequasiSasak} reads as $$\Omega \wedge
\Omega=-\alpha^2\,\omega \wedge \omega.$$}
\end{remark}

In the case of a trivial $S^1$-bundle,  i.e. by considering the
natural almost Hermitian structure on the product $N^{2n + 1} \times
\R$, we get the following

\begin{corollary} \label{product}  Let $(N^{2n+1}, I, \xi, \eta, g)$ be a $(2n+1)$-dimensional
almost contact  metric manifold.  Consider  on the  product $N^{2n +
1} \times \R$ the  almost  complex structure~$J$ given by
$$
JX = I X,   \quad X \in {\mbox {Ker}} \,  \eta, \quad J \xi = -
\frac {d} {dt},
$$
and  the product metric $h = g + (dt)^2$. The  Hermitian structure
$(J, h)$  is {\rm SKT} if and only  if $(I, \xi, \eta, g)$ is normal
and such that
$$
 d ( I (d \omega) )= d (d \eta \wedge \eta), \quad d  ( I (i_{\xi} d \omega))  = 0,
$$
 where $\omega$  denotes the  fundamental $2$-form of the almost contact metric structure  $(I, \xi, \eta, g)$.
\end{corollary}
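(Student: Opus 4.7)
The plan is to invoke Theorem~\ref{non-t-circle-bundle} in the special case of a trivial circle bundle $P = N^{2n+1} \times S^1 \to N^{2n+1}$ with flat connection form $\theta = dt$, corresponding to $\Omega = 0$. Since the SKT condition $d(JdF) = 0$ is local, the result for $N^{2n+1} \times S^1$ transfers immediately to $N^{2n+1} \times \R$.

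First I would carry out the bookkeeping identifying the trivial-bundle structure with the product structure described in the statement. For a right-invariant vector field $X = X_h + f\,\partial_t$ on $P$ with $X_h$ horizontal, the formulas \eqref{acxS1} yield $JX = I X_h + f\xi - \eta(X_h)\,\partial_t$, which specializes to $JX = IX$ for $X \in \ker\eta$ and $J\xi = -\partial_t$. Likewise the metric \eqref{acxS2-1} with $\theta = dt$ becomes the product metric $g + dt^2$. Thus the almost Hermitian structure on $P$ coincides with the one prescribed in the corollary.

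Next I would specialize the conditions of Theorem~\ref{non-t-circle-bundle} by setting $d\theta = 0$. The $J$-invariance requirement on $d\theta$ is trivially satisfied, so the only remaining integrability requirement is normality of $(I, \xi, \eta, g)$. The two equations in \eqref{acxS2-2} collapse to
$$d(\pi^*(I(i_\xi d\omega))) = 0, \qquad d(\pi^*(I(d\omega) - d\eta \wedge \eta)) = 0.$$
Because $\pi^*$ is injective on forms on $N^{2n+1}$ and commutes with $d$, these are equivalent to $d(I(i_\xi d\omega)) = 0$ and $d(I(d\omega)) = d(d\eta \wedge \eta)$, precisely the equations in the corollary.

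The main obstacle here is minor and purely bookkeeping; no new computation beyond the proof of Theorem~\ref{non-t-circle-bundle} is needed. As an alternative, one could repeat the derivation of \eqref{expressionJdF} directly on $N^{2n+1} \times \R$, observing that with $\theta = dt$ the terms $\theta \wedge d\theta$ and every $d\theta$-contribution vanish, leaving $JdF = \pi^*(I(d\omega)) + \pi^*(I(i_\xi d\omega)) \wedge dt - \pi^*(d\eta) \wedge \pi^*\eta$, from which the closedness criterion visibly reduces to the two stated equations.
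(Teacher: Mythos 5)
Your proposal is correct and follows exactly the route the paper intends: the corollary is stated as the specialization of Theorem~\ref{non-t-circle-bundle} to the trivial bundle ($\theta = dt$, $\Omega = 0$), under which the $J$-invariance of $d\theta$ is automatic and the two equations in \eqref{acxS2-2} collapse to the two stated conditions. Your extra remark on passing from the $S^1$-fibre to the $\R$-factor via locality of $d(JdF)=0$ is a point the paper glosses over, and it is handled correctly.
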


As a consequence of previous results  we get

\begin{corollary} \label{quasiSasaki} Let $(N^{2n + 1}, I, \xi, \eta, g)$ be a  $(2n + 1)$-dimensional quasi-Sasakian  manifold  such that
$d \eta \wedge d \eta =0$.  Then, the  Hermitian structure $(J, h)$
on $N^{2n + 1} \times \R$  is {\rm SKT}. Moreover, its Bismut
connection $\nabla^B$ coincides with the unique connection
$\nabla^c$ on $N^{2n + 1}$ given by \eqref{almostcontactconnection}.

\end{corollary}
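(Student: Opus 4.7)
The plan is to deduce the corollary directly from the two preceding results, viewing $N^{2n+1}\times \R$ either as the product endowed with its natural Hermitian structure (Corollary~\ref{product}) or as the trivial $S^1$-bundle with vanishing curvature $\Omega=0$ (Corollary~\ref{S1bundle-quasiSasaki}).

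For the SKT assertion, I would check that the hypotheses of Corollary~\ref{product} are met: the quasi-Sasakian assumption provides normality and $d\omega=0$, so $I(d\omega)=0$ and $i_{\xi}d\omega=0$. The two SKT conditions of that corollary therefore collapse to $0 = d(d\eta\wedge\eta)=-d\eta\wedge d\eta$ and $0=0$. The former is precisely the extra hypothesis $d\eta\wedge d\eta=0$, so $(J,h)$ is SKT.

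For the identification of $\nabla^B$ with $\nabla^c$, I would take $\Omega=0$ and $\theta=dt$ in Corollary~\ref{S1bundle-quasiSasaki}: the conditions that $\Omega$ be $I$-invariant and satisfy $i_\xi\Omega=0$ are trivial, and the equation $d\eta\wedge d\eta=-\Omega\wedge\Omega$ is guaranteed by the hypothesis. Formula~\eqref{bismutnablac} then yields
\[
h(\nabla^B_XY,Z)=g(\nabla^c_{\pi_*X}\pi_*Y,\pi_*Z)
\]
for all $X,Y,Z\in\mathrm{Ker}\,\theta$, i.e.\ for vector fields tangent to the $N$-factor. To extend the identification to the entire tangent bundle of $N^{2n+1}\times\R$, I would use that $d/dt=-J\xi$ together with $\nabla^B J=0$ and $\nabla^c\xi=0$ (which follows from $\nabla^c\eta=0$ and $\nabla^c g=0$, since $\xi$ is the metric dual of $\eta$) to conclude $\nabla^B_X(d/dt)=0$; the components $\nabla^B_{d/dt}$ are handled analogously, showing that $d/dt$ is Bismut-parallel and that $\nabla^B$ reduces to the trivial extension of $\nabla^c$.

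I do not expect a serious obstacle here, since both auxiliary statements already carry the real content and their hypotheses match exactly. The only mildly delicate point is interpreting Corollary~\ref{S1bundle-quasiSasaki} with $\Omega=0$ directly on $N^{2n+1}\times\R$ rather than on the trivial $S^1$-bundle $N^{2n+1}\times S^1$; since all formulas involved are local, this causes no trouble.
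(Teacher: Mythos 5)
Your proposal is correct and follows essentially the same route as the paper: the SKT claim reduces, via $d\omega=0$, to $d(d\eta\wedge\eta)=d\eta\wedge d\eta=0$ (your minus sign in $d(d\eta\wedge\eta)=-d\eta\wedge d\eta$ is a harmless slip, since the form vanishes either way), and the identification $\nabla^B=\nabla^c$ is read off from \eqref{bismutnablac} in the trivial-bundle case. Your extra remark on extending the identification to the $\frac{d}{dt}$-direction goes slightly beyond what the paper records, but is not needed for the statement as formulated.
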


\begin{proof} In this case, since $d \omega =0$   we get
$$
d (J dF) = -  d ( d \eta \wedge \eta).
$$
Moreover, by using \eqref{bismutnablac}
$$
h (\nabla^B_X Y, Z) = g (\nabla^c_X Y, Z),
$$
for any vector fields $X, Y, Z$ on $N^{2n + 1}$.
 \end{proof}

\subsection{Examples}

We will start presenting three examples of quasi-Sasakian Lie
algebras satisfying the condition $d \eta \wedge d \eta =0$. By
applying  Corollary \ref{quasiSasaki} one gets an SKT structure on
the product of the corresponding simply-connected Lie group by~$\R$.

\begin{example} \label{ex-prod} {\rm Let  ${\mathfrak s}$  be the $5$-dimensional
Lie  algebra with structure equations
$$\begin{cases}
\begin{array}{l}
 d e^1 =
e^{13} + e^{23} + e^{25} - e^{34} + e^{35},\\
d e^2 = 2  e^{12} - 2 e^{13} + e^{14}- e^{15}-  e^{24} + e^{34} + e^{45} ,\\
 d e^3 = - e^{12}  + e^{13}  + e^{14} -e^{15}  + 2 e^{24}  - 2 e^{34}  + e^{45},\\
 d e^4 = - e^{12} - e^{23} + e^{24} - e^{25} - e^{35},\\
 d e^5 = e^{12} - e^{13} - e^{24} + e^{34},
 \end{array}\end{cases}
$$
where by $e^{ij}$ we denote $e^i \wedge e^j$.

Consider on ${\mathfrak s}$  the quasi-Sasakian structure $(I, \xi,
\eta, g)$ given by
\begin{equation}\label{quasiSask-st}\eta = e^5,  \quad  I e^1 =  - e^2, \quad  I e^3 = - e^4, \quad   \omega = -e^{12} - e^{34},\quad g=\sum_{j=1}^5(e^j)^2.\end{equation}
We have that the  above quasi-Sasakian  structure satisfies the
condition  $d (d\eta\wedge\eta)=~0$.

The Lie algebra ${\mathfrak s}$  is $2$-step solvable since the
commutator
$$
{\mathfrak s}^1 = [{\mathfrak s}, {\mathfrak s}] = \R< e_1 - e_4,\,
e_2 + e_3,\, e_1 - e_2 + 2 e_3 - e_5>$$ is abelian,  where $\{ e_1,
\ldots, e_5 \}$ denotes the dual basis of  $\{ e^1, \ldots, e^5 \}$.
Moreover ${\mathfrak s}$  has  trivial center,  it  is irreducible
and non unimodular, since we have that the trace of $ad_{e_1}$ is
equal to $-3$. }
\end{example}

\begin{example}\label{ex-prod2} {\rm  Consider  the family of   $2$-step solvable Lie algebras ${\mathfrak s}_a$, $a \in \R - \{ 0 \}$,     given by
$$\begin{cases}
\begin{array}{l}
de^1\,=\,a\,e^{23} + 3\,e^{25},\\
de^2\,=\,-a\,e^{13} -3\,e^{15},\\
de^3\,=\,a\,e^{34},\\
de^4\,=\,0,\\
de^5\,=\,-\frac{a^2}{3}\,e^{34}.
\end{array}\end{cases}
$$
The almost contact  metric structure  $(I, \xi, \eta, g)$  given
by~\eqref{quasiSask-st}   is quasi-Sasakian and satisfies  the
condition $d \eta \wedge d \eta =0$. Moreover, the second cohomology
group of  ${\mathfrak s}_a$  is generated by $e^{12}$ and $e^{45}$.
}

\end{example}

\begin{example}\label{ex-prod3}  {\rm Another example of  family of quasi-Sasakian Lie algebras  satisfying the condition $d \eta \wedge d \eta =0$ is ${\mathfrak g}_b$, $b \in \R - \{ 0 \}$,  with structure equations
 $$
 \begin{cases}
 \begin{array}{l}
de^1\,=\,b\,(e^{13} + e^{14} - e^{23}+e^{24}) + e^{25},\\
de^2\,=\,b\,(-e^{13} + e^{14} - e^{23}-e^{24}) - e^{15},\\
de^3\,=\,2\,e^{45},\\
de^4\,=\,-2\,e^{35},\\
de^5\,=\,-4b^2\,e^{34},
\end{array}
\end{cases}
$$
and endowed with the quasi-Sasakian structure given by \eqref{quasiSask-st}.
The second cohomology group  of ${\mathfrak g}_b$ is generated by $e^{12}$. The Lie algebras ${\mathfrak g}_b$ are not solvable since  for  the commutator  we have $[{\mathfrak g}_b, {\mathfrak g}_b]={\mathfrak g}_b$. }
\end{example}

 The Lie groups underlying  examples \ref{ex-prod2}  and \ref{ex-prod3}  satisfy also the conditions of Corollary
 \ref{S1bundle-quasiSasaki}  with $\Omega \wedge \Omega=0$  just
by considering  as connection $1$-form the $1$-form $e^6$ such that
$de^6=\lambda e^{12}$  and then $\Omega = \lambda e^{12}$.  With
this expression of $de^6$ we have that:  $d^2e^6=0,\, J(de^6)=de^6$
and $de^6\wedge de^6=0$, and therefore equation
\eqref{bundlequasiSasak} is satisfied. Observe that $\lambda=0$
provides  examples   of trivial $S^1$-bundles.

We can recover also one of the $6$-dimensional nilmanifolds found in \cite{FPS}.

\begin{example}\label{heisenberg}
{\rm  Consider the $5$-dimensional  nilpotent Lie algebra with structure equations
$$
\left \{ \begin{array}{l}
d e^j =0, \quad j = 1, \ldots, 4,\\
d e^5 = e^{12} + e^{34},
\end{array} \right.
$$
and endowed with the quasi-Sasakian structure given
by~\eqref{quasiSask-st}. If we  consider the  closed $2$-form
 $\Omega=e^{13} + e^{24}$ and we  apply  Corollary~\ref{S1bundle-quasiSasaki} we have that there exists a non trivial $S^1$-bundle over the corresponding $5$-dimensional nilmanifold. Moreover, since  $ d e^5 \wedge d e^5 = - \Omega   \wedge \Omega \neq 0$, the total space of this $S^1$-bundle is  an  SKT nilmanifold. More precisely, according to the classification
given in~\cite{FPS} (see also \cite{U}),  the nilmanifold  is the one with underlying Lie algebra
isomorphic to ${\mathfrak h}_3 \oplus {\mathfrak h}_3$,  where by ${\mathfrak h}_3$ we denote the real $3$-dimensional Heisenberg  Lie algebra. }
\end{example}

Since the starting Lie algebra in
Example~\ref{heisenberg} is Sasakian, it is natural to start with
other $5$-dimensional Sasakian Lie algebras to construct new SKT
structures in dimension 6. A classification of $5$-dimensional Sasakian  Lie algebras was  obtained
in~\cite{AFV}.

\begin{example}\label{k3}
{\rm Consider the 5-dimensional Lie algebra ${\frak
k}_3$ with structure equations
$$
\left\{ \begin{array}{l}
d e^j =0, \quad j = 1, 4,\\
d e^2 = - e^{13},\\
d e^3 = e^{12},\\
d e^5 = \lambda\, e^{14}+\mu\, e^{23},
\end{array}
\right.
$$
where $\lambda,\mu<0$. By~\cite{AFV}   ${\frak k}_3$ admits the
Sasakian  structure given by
$$
\begin{array}{l}
I e^1 =  e^4, \quad I e^2 = e^3, \quad \eta = e^5,\\[4pt]
g = -\frac{\lambda}{2}\, (e_1)^2 -\frac{\lambda}{2}\,(e_2)^2 -\frac{\mu}{2}\,(e_3)^2-\frac{\mu}{2}\,(e_4)^2+\,(e_5)^2,
\end{array}
$$
and   it is isomorphic to
$\mathbb{R}\ltimes ({\frak h}_3\times \mathbb{R})$. Moreover, by \cite{AFV} the  corresponding  solvable simply-connected Lie group admits a compact quotient by a discrete subgroup.

 Consider on ${\mathfrak k}_3$ the closed  $2$-form
$\Omega=\lambda\,e^{14}-\mu\,e^{23}$. $\Omega$ is  $I$-invariant  and satisfies
$\Omega\wedge\Omega=-2\lambda\mu\,e^{1234}$. Since $e^5$ is the contact form
and $de^5\wedge de^5=2\lambda\mu\, e^{1234}$, again we get by
Corollary~\ref{S1bundle-quasiSasaki} an SKT structure on a non trivial $S^1$-bundle  over the $5$-dimensional solvmanifold. We will denote by $e^6$ the connection $1$-form.

The orthonormal basis
$\{\alpha^1=e^1,\,\alpha^2=e^4,\,\alpha^3=e^2,\,\alpha^4=e^3,\,\alpha^5=e^5,\,\alpha^6=\theta\}$
for the SKT metric satisfies the equations
$$
d\alpha^1=d\alpha^2=0,\quad d\alpha^3=-\alpha^{14},\quad
d\alpha^4=\alpha^{13},$$
$$d\alpha^5=\lambda\,\alpha^{12}+\mu\,\alpha^{34},\quad
d\alpha^6=\lambda\,\alpha^{12}-\mu\,\alpha^{34},
$$
 and  the
complex structure  is given by $J(X_1)=X_2, J(X_3)=X_4, J(X_5)=X_6$,
where $\{X_i\}_{i=1}^6$ denotes the basis dual to $\{\alpha^i\}_{i=1}^6$. Since the fundamental $2$-form is
$F=\alpha^{12}+\alpha^{34}+\alpha^{56}$, one has that the $3$-form  torsion
$T$ of the SKT structure is
$$
T=\lambda\,\alpha^{12}(\alpha^5+\alpha^6)+\mu\,\alpha^{34}(\alpha^5-\alpha^6).
$$
Moreover,
$*T=\lambda\,\alpha^{12}(\alpha^5+\alpha^6)-\mu\,\alpha^{34}(\alpha^5-\alpha^6)$,
where $*$ denotes the Hodge operator of the metric, which implies
that the torsion form is also coclosed.

The only nonzero curvature forms $(\Omega^B)^i_j$ of the Bismut
connection $\nabla^B$ are
$$
(\Omega^B)^1_2=-2\,\lambda^2 \alpha^{12}, \quad\quad
(\Omega^B)^3_4=-2\,\mu^2 \alpha^{34}.
$$
A direct calculation shows that the 1-forms $\alpha^{5}, \alpha^{6}$
and the 2-forms $\alpha^{12},\alpha^{34}$ are parallel with respect
to the Bismut connection, which implies that $\nabla^BT=0$.

Finally, since $\nabla^B \alpha^i\not= 0$ for $i=1,2,3,4$, we
conclude that $Hol(\nabla^B)=U(1)\times U(1)\subset
U(3)$.

}
\end{example}

\section{SKT structures arising from Riemannian cones}\label{sectioncones}

Let $N^{2n + 1}$ be a $(2n + 1)$-dimensional  manifold endowed with
an   almost contact metric structure  $(I, \xi, \eta, g)$ and denote
by $\omega$ its fundamental $2$-form.

 The Riemannian cone of $N^{2n + 1}$ is defined as the manifold $N^{2n + 1} \times \R^+$ equipped with the cone metric:
\begin{equation}\label{metricconeexpr}
h = t^2 g + (dt)^2.
\end{equation}
The cone $N^{2n + 1} \times  \R^+$ has a natural  almost Hermitian
structure defined by
\begin{equation} \label{2formconeexpr}
F = t^2 \omega + t \eta \wedge dt.
\end{equation}
The almost complex structure $J$  on $N^{2n + 1} \times \R^+$ defined by $(F,
h)$  is given by
$$J X = I X,  \,  \,   X \in  {\mbox {Ker}}  \, \eta,  \quad  J \xi =-  t \frac{d}{dt}.
$$ In terms of a local orthonormal  adapted coframe  $\{ e^1, \ldots, e^{2n} \}$ for $g$ such that
\begin{equation} \label{formsexpression}
\omega = - \sum_{j = 1}^{n} e^{2j - 1} \wedge e^{2j},
\end{equation} we have
\begin{equation} \label{almostcxcone}
\begin{array}{l}
J e^{2j - 1} = - e^{2j},  \quad J e^{2j} = e^{2j - 1}, \, j = 1, \ldots, n, \\
   J (t e^{2n + 1}) =  dt,  \quad J (dt) = -  t e^{2n + 1}.
\end{array}
\end{equation}
The almost Hermitian structure $(J, h)$ on  $N^{2n + 1} \times \R^+$
is K\"ahler if and only if the  almost contact metric structure $(I,
\xi, \eta, g)$ on $N^{2n + 1}$ is Sasakian, i.e. a  normal contact
metric structure.

If we impose that the  almost Hermitian structure   $(J, h)$ on
$N^{2n + 1} \times \R^+$ is SKT, we can prove the following

\begin{theorem} \label{Riemcone} Let $(N^{2n + 1},I, \xi, \eta, g)$ be a $(2n + 1)$-dimensional
almost contact metric  manifold. The  almost Hermitian structure
$(J, h)$ on the Riemannian cone $(N^{2n + 1}  \times \R^+, h)$,
given by~\eqref{metricconeexpr} and~\eqref{2formconeexpr},  is SKT
if and only if $(I, \xi, \eta, g)$ is normal and
\begin{equation} \label{SKTCONE}
-4 \eta \wedge  \omega + 2 I (d \omega) -   2  d \eta  \wedge \eta
= d (I (i_{\xi} d \omega)),
\end{equation} where $\omega$ denotes the fundamental $2$-form of the almost contact  metric structure $(I, \xi, \eta, g)$.
\end{theorem}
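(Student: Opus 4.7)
The plan is to establish the SKT condition $d(JdF)=0$ on the cone by direct computation, exploiting the bigraded structure in which every form on $N^{2n+1}\times\R^+$ splits into a piece pulled back from $N^{2n+1}$ and a piece containing $dt$. For integrability of $J$, I would reduce to the classical fact (via the substitution $t=e^s$) that the almost complex structure of the Riemannian cone is biholomorphic to the natural almost complex structure of \cite{SH} on $N^{2n+1}\times\R$, and is therefore integrable exactly when $(I,\xi,\eta,g)$ is normal. Throughout the remainder I then assume normality, so by the Remark above $i_\xi d\eta=0$ and $Id\eta=d\eta$.

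From $F=t^2\omega+t\,\eta\wedge dt$ a direct differentiation gives $dF=t^2 d\omega+2t\,dt\wedge\omega-t\,dt\wedge d\eta$. Applying $J$ via \eqref{almostcxcone}, together with the identities $J\eta=t^{-1}dt$, $J(dt)=-t\eta$, $J\omega=\omega$ and $Jd\eta=d\eta$, and the decomposition $d\omega=\alpha_0+(i_\xi d\omega)\wedge\eta$ with $\alpha_0\in\Lambda^3\langle e^1,\ldots,e^{2n}\rangle$ (so that $I(d\omega)=I\alpha_0$), I obtain
\begin{equation*}
JdF=t^2\,I(d\omega)+t\,I(i_\xi d\omega)\wedge dt-2t^2\,\eta\wedge\omega-t^2\,d\eta\wedge\eta.
\end{equation*}

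Differentiating once more, using $d(t^2)=2t\,dt$, $d(d\eta\wedge\eta)=d\eta\wedge d\eta$ and $dt\wedge dt=0$, I expect to reach the decomposition
\begin{align*}
d(JdF)&=t\,dt\wedge\bigl\{2\,I(d\omega)-d(I(i_\xi d\omega))-4\,\eta\wedge\omega-2\,d\eta\wedge\eta\bigr\}\\
&\quad+t^2\bigl\{d(I(d\omega))+2\,\eta\wedge d\omega-2\,d\eta\wedge\omega-d\eta\wedge d\eta\bigr\}.
\end{align*}
Vanishing of the $dt$-component rearranges to exactly \eqref{SKTCONE}. A short check shows that the second bracket equals (up to a factor $2$) the exterior derivative on $N^{2n+1}$ of the first, so vanishing of the $dt$-component automatically forces vanishing of the horizontal component, by $d^2=0$. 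The main obstacle will be the disciplined tracking of signs and $t$-powers when commuting horizontal forms past $dt$ and differentiating the $t$-dependent coefficients; a secondary subtlety is that the identification $I(d\omega)=I\alpha_0$ relies on $I\eta=0$, while the clean form of the $d\eta$ contributions depends crucially on normality through $Jd\eta=d\eta$.
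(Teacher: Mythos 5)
Your proposal is correct and follows essentially the same route as the paper: integrability via normality, the computation of $JdF$ using $Jd\omega = I(d\omega)+I(i_\xi d\omega)\wedge J\eta$, and the observation that the $t^2$-component of $d(JdF)$ is (half) the exterior derivative of the $t\,dt$-component, so only \eqref{SKTCONE} survives. The only blemish is a sign slip in your intermediate expression for $dF$ (the last term should be $+t\,dt\wedge d\eta$, since $d(t\,\eta\wedge dt)=t\,d\eta\wedge dt=+t\,dt\wedge d\eta$), but your displayed formulas for $JdF$ and $d(JdF)$ are nonetheless correct and agree with the paper's.
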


\begin{proof}  $J$ is integrable if and only if the almost contact metric structure is normal. Now we compute $J dF$. We have that
$$
d F = 2 t dt \wedge \omega + t^2 d \omega + t d \eta \wedge dt,
$$
and
$$
J d F = - 2 t ^2  \eta  \wedge \omega + t^2 J (d \omega)  - t^2   d
\eta \wedge  \eta ,
$$
since
$$
J \omega = \omega, \quad J (dt) = - t \eta, \quad J d \eta = d \eta.
$$
Moreover, with respect to an  adapted basis $\{ e^1, \ldots,e^{2n + 1} \}$ we may prove, in a similar way as in the proof of Theorem \ref{non-t-circle-bundle},
 that
\begin{equation} \label{domega1}
J d \omega = I (d \omega) + I (i_{\xi} d \omega) \wedge J \eta.
\end{equation}

As a consequence we get
$$
J dF = - 2 t^2 \eta \wedge \omega + t^2 I (d \omega) + t dt \wedge
I (i_{\xi} d \omega) - t^2 d \eta \wedge \eta.
$$
Therefore, by imposing $d (J dF) =0$ we obtain  the two equations
$$
\left\{
\begin{array}{l}
-4 \eta \wedge  \omega + 2 I (d \omega) -   2  d \eta \wedge \eta  -d (I (i_{\xi} d \omega))  =0,\\[4pt]
-2d( \eta \wedge \omega) + d( I (d \omega)) - d  (d \eta \wedge
\eta)=0.
\end{array}
\right.
$$
Since the second equation is consequence of the first one,  we have
that the Hermitian structure   $(F, h)$ on the Riemannian  cone
$N^{2n + 1} \times \R^+$  is SKT if and only if the  almost contact
metric structure  $(I, \eta, \xi, g, \omega)$ on $N^{2n + 1}$
satisfies the equation~\eqref{SKTCONE}.
\end{proof}

\begin{remark}{\rm As a consequence of previous theorem we have that, if $n=1$, equation~\eqref{SKTCONE} is satisfied if and only if the 3-dimensional manifold $N$ is Sasakian.  On the other hand, if $n>1$ and
  the  almost contact metric structure on $N^{2n + 1}$ is  quasi-Sasakian (i.e. $d \omega =0$), then the structure has to be Sasakian, i.e. $d \eta = -  2 \omega$.  } \end{remark}

\begin{example} {\rm Consider the $5$-dimensional Lie algebras $\mathfrak g_{a,b,c}$  with structure equations
 $$\begin{cases}
 \begin{array}{l}
 d e^1 = a\,e^{23} + 2\,e^{25} + \left( - \frac{1}{2} ab + \frac{b^3}{ 2 a} + 2 \frac{b} {a} \right)\,e^{34} + b\,e^{45},\\[4pt]
 d e^2 =  - a\,e^{13} - 2\,e^{15} - \frac{1}{2} b c\,e^{34} - b\,e^{35},\\[4pt]
 d e^3 = \left( - \frac{4}{a}- \frac{b^2}{a}  \right)\,e^{34},\\[4pt]
 d e^4 = c\,e^{34},\\[4pt]
  d e^5 = 2\,e^{12} + b\,e^{14}  - b\,e^{23} + (2 + b^2)\,e^{34},
  \end{array}\end{cases}
  $$ where $a,b,c\in\R$ and $a\neq0$,
  endowed with the normal almost contact  metric structure $(I, \xi, \eta, g, \omega)$ with
  $$
  I e^1 = - e^2, \quad I e^3 = - e^4, \quad \eta = e^5,\quad
  \omega = - e^{12} - e^{34}.
  $$
  This structure satisfies \eqref{SKTCONE} and therefore, the Riemannian cones over the corresponding simply-connected Lie groups are  SKT.}
   \end{example}

\section{SKT SU(3)-structures} \label{secconesSKTSU(3)}

Let $(M^{6}, J, h)$ be a $6$-dimensional  almost Hermitian manifold. An $SU(3)$--structure on $M^6$  is determined
by the choice of a  $(3,0)$-form $\Psi = \Psi_+ + i \Psi_-$ of unit norm.
 If $\Psi$ is closed, then the underlying almost complex
structure $J$ is integrable and the manifold is Hermitian. We will denote the $SU(3)$-structure  $(J, h, \Psi)$  simply by  $(F, \Psi)$, where $F$ is the fundamental $2$-form, since from     $F$ and $\Psi$ we can reconstruct the almost Hermitian structure.

We can give the following

\begin{definition}\label{sktSU(3)-structure}
We say that an {\rm SU(3)}-structure $(F, \Psi)$ on $M^6$  is \emph{SKT} if
\begin{equation}\label{sktSU(3)-conditions}
d\Psi=0, \qquad d(JdF)=0,
\end{equation}
where $J$ is the associated complex structure.
\end{definition}

We will see the relation between  SKT $SU(3)$-structures in
dimension $6$ and $SU(2)$-structures in dimension $5$.

First we recall some facts about $\SU(2)$-structures on a
$5$-dimensional manifold.  An {\em $\SU(2)$-structure} on  a $5$-dimensional manifold $N^5$
is an $\SU(2)$-reduction of the principal
bundle of linear frames on $N^5$. By \cite[Proposition 1]{CS},
these structures are in $1:1$ correspondence with quadruplets
$(\eta,\omega_1,\omega_2,\omega_3)$, where $\eta$ is a $1$-form and
$\omega_i$ are $2$-forms on $N^5$ satisfying
$$
\omega_i\wedge\omega_j=\delta_{ij}v, \quad v\wedge\eta\not=0,
$$
for some $4$-form $v$, and
$$
i_X\omega_3=i_Y\omega_1\ \Rightarrow\ \omega_2(X,Y)\ge 0,
$$
where $i_X$ denotes the contraction by $X$. Equivalently, an
$\SU(2)$-structure on $N^5$ can be viewed as the datum of
$(\eta,\omega_1,\Phi)$, where $\eta$ is a $1$-form, $\omega_1$ is a
$2$-form and $\Phi =\omega_2+i\,\omega_3$ is a complex $2$-form such
that
$$
\eta\wedge\omega_1\wedge\omega_1 \neq 0, \quad\quad
\Phi\wedge\Phi=0, \quad\quad \omega_1\wedge\Phi =0, \quad\quad
\Phi\wedge\overline{\Phi} =2\,\omega_1\wedge\omega_1,
$$
and $\Phi$ is of type $(2,0)$ with respect to $\omega_1$.

$\SU(2)$-structures are locally characterized as follows (see
\cite{CS}): If $(\eta,\omega_1,\omega_2,\omega_3)$ is an
$\SU(2)$-structure on a $5$-manifold $N^5$, then locally, there exists
an orthonormal basis of $1$-forms $\{ e^1,\ldots ,e^5\}$ such that
$$
\omega_1 = e^{12}+e^{34},\qquad \omega_2 = e^{13}-e^{24},\qquad
\omega_3 = e^{14}+e^{23},\qquad \eta = e^{5}\,.
$$

We can also consider the local tensor field $I$ given by
$$
I e^1= -e^2,\quad I e^2= e^1,\quad I e^3= -e^4,\quad I e^4=
e^3,\quad I e^5= 0.
$$
This tensor gives rise to a global tensor field of type $(1,1)$ on the manifold
$N^5$ defined by $\omega_1(X,Y)=g(X,IY)$, for any vector fields $X,Y$
 on $N^5$, where $g$ is the Riemannian metric on $N^5$ underlying the $SU(2)$-structure. The tensor field $I$ satisfies
$$
I^2=-Id + \eta\otimes \xi,
$$
where $\xi$ is the vector field on $N^5$ dual to the 1-form $\eta$.

Therefore, given an $SU(2)$-structure
$(\eta,\omega_1,\omega_2,\omega_3)$ we also have an almost contact
metric structure $(I,\xi,\eta, g)$ on the manifold, where $\omega_1$
is the fundamental form.

\begin{remark}
{\rm Notice that we have two more almost contact metric structures
when one considers $\omega_2$ and $\omega_3$ as fundamental
forms.}
\end{remark}

If $N^5$ has an $SU(2)$-structure  $(\eta, \omega_1, \omega_2,
\omega_3)$, the product $N^5\times \R$ has a natural $SU(3)$-structure  given by
\begin{equation} \label{SKTSU3productexp}
\begin{array}{l}
F =  \omega_1 + \eta \wedge dt,\\[4pt]
\Psi =  (\omega_2 + i \omega_3) \wedge (\eta - i dt).
\end{array}
\end{equation}
Moreover, by Corollary  \ref{product} the previous
$SU(3)$-structure is SKT if and only if
\begin{equation} \label{eqSU3sktprod}
\begin{array}{l}
 d ( I (d \omega_1) )= d (d \eta \wedge \eta), \quad d  ( I (i_{\xi} d \omega_1))  = 0,\\[5pt]
 d \omega_2 =  - 3 \,  \omega_3 \wedge \eta, \quad d \omega_3 = 3  \, \omega_2  \wedge \eta.
 \end{array}
\end{equation}
Then we have proved  the following
\begin{theorem} \label{SKTSU(3)product} Let $N^5$ be a $5$-dimensional manifold endowed with an $SU(2)$-structure $(\eta, \omega_1, \omega_2, \omega_3)$. The   $SU(3)$-structure  $(F, \Psi)$, given by \eqref{SKTSU3productexp}, on the  product  $N^5 \times \R$ is SKT if and only if the equations \eqref{eqSU3sktprod} are satisfied.
\end{theorem}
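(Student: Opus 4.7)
The strategy is to decouple the two SKT $SU(3)$ conditions in \eqref{sktSU(3)-conditions} and handle them separately, since they factor cleanly over the product. Observe that the fundamental $2$-form $F=\omega_1+\eta\wedge dt$ determines precisely the same underlying almost Hermitian structure on $N^5\times\R$ as that produced in Corollary \ref{product} from the almost contact metric structure $(I,\xi,\eta,g)$ on $N^5$ whose fundamental form is $\omega_1$. Hence Corollary \ref{product} applied with $\omega=\omega_1$ shows that $d(JdF)=0$ is equivalent to the normality of $(I,\xi,\eta,g)$ together with the first two equations of \eqref{eqSU3sktprod}, namely
$$d(I(d\omega_1))=d(d\eta\wedge\eta),\qquad d(I(i_\xi d\omega_1))=0.$$

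For the condition $d\Psi=0$, I would compute directly. Since $\omega_2,\omega_3,\eta$ are pulled back from $N^5$ and $d(dt)=0$, the Leibniz rule yields
$$d\Psi=(d\omega_2+i\,d\omega_3)\wedge(\eta-i\,dt)+(\omega_2+i\omega_3)\wedge d\eta.$$
I would then separate into real and imaginary parts, and further decompose each into the pieces with and without a factor of $dt$, obtaining a system of $4$-form identities on $N^5$. Exploiting the $SU(2)$-compatibility relations ($\omega_i\wedge\omega_j=\delta_{ij}v$, $\omega_1\wedge\Phi=0$, and $\Phi=\omega_2+i\omega_3$ of type $(2,0)$ with respect to $\omega_1$) together with the normality already secured by the first half of the argument, this system should reduce to the remaining two equations
$$d\omega_2=-3\,\omega_3\wedge\eta,\qquad d\omega_3=3\,\omega_2\wedge\eta.$$

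The main obstacle is the bookkeeping involved in this bidegree decomposition: one must verify that the $dt$-free components produced by $d\Psi=0$ are automatic consequences of the $dt$-containing ones once the $SU(2)$-relations and normality are invoked, so that no further independent condition is imposed beyond the four equations listed. After this reduction is checked, the final observation is that $d\Psi=0$ already forces the integrability of $J$ on $N^5\times\R$, which by \cite{SH} coincides with the normality of $(I,\xi,\eta,g)$. This matches the normality requirement coming from Corollary \ref{product}, so the two halves are mutually consistent and together characterize the SKT $SU(3)$ condition as exactly the system \eqref{eqSU3sktprod}.
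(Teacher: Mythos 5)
Your first half — invoking Corollary \ref{product} with $\omega=\omega_1$ to convert $d(JdF)=0$ into normality plus the first two equations of \eqref{eqSU3sktprod} — is exactly the route the paper takes, and it is fine. The genuine gap is in the second half, at the step you describe as ``this system should reduce to the remaining two equations'': it does not, and no appeal to the $SU(2)$-compatibility relations or to normality can make it do so. Your Leibniz computation
$$
d\Psi=(d\omega_2+i\,d\omega_3)\wedge(\eta-i\,dt)+(\omega_2+i\omega_3)\wedge d\eta
$$
is correct, but now isolate the component containing $dt$. Since $\omega_2,\omega_3,\eta$ are pulled back from $N^5$ and are $t$-independent, that component is $(d\omega_3-i\,d\omega_2)\wedge dt$, and because $d\omega_2$ and $d\omega_3$ are \emph{real} $3$-forms its vanishing forces $d\omega_2=d\omega_3=0$; the $dt$-free component then forces $\omega_2\wedge d\eta=\omega_3\wedge d\eta=0$. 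These are precisely the conditions obtained from Lemma \ref{evolution-eqs-1} by taking a $t$-independent family, and they are incompatible with $d\omega_2=-3\,\omega_3\wedge\eta$ unless $\omega_3\wedge\eta=0$, which never happens for an $SU(2)$-structure (locally $\omega_3\wedge\eta=e^{145}+e^{235}\neq0$).

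The equations $d\omega_2=\mp 3\,\omega_3\wedge\eta$, $d\omega_3=\pm 3\,\omega_2\wedge\eta$ are the $d\Psi=0$ conditions for the \emph{Riemannian cone}, where $\Psi=t^2(\omega_2+i\omega_3)\wedge(t\eta-i\,dt)$ carries powers of $t$ whose differentiation produces terms proportional to $(\omega_2+i\omega_3)\wedge\eta\wedge dt$ that cancel against $(d\omega_2+i\,d\omega_3)\wedge dt$; on the product no such terms arise. So the reduction you postponed as ``bookkeeping'' is exactly the point at which the argument — and, with Definition \ref{sktSU(3)-structure} and $\Psi$ as in \eqref{SKTSU3productexp}, the statement itself — breaks down; your proof cannot be completed as written. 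A version of the theorem that your computation does establish replaces the last two equations of \eqref{eqSU3sktprod} by $d\omega_2=d\omega_3=0$ together with $d(\omega_2\wedge\eta)=d(\omega_3\wedge\eta)=0$. You should either prove that corrected statement or flag the discrepancy explicitly rather than assert that the reduction ``should'' go through.
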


\begin{example} {\rm Consider   on the  $5$-dimensional  Lie algebras, introduced  in Examples   \ref{ex-prod}, \ref{ex-prod2} and \ref{ex-prod3},    the $SU(2)$-structure given by
$$
\omega = \omega_1 = e^{12} + e^{34}, \quad \omega_2 = e^{13} -
e^{24}, \quad \omega_3 = e^{14} + e^{23}.
$$
For the example \ref{ex-prod}  we have:
$$d\omega_2=-2\;\omega_3\wedge\eta - 4(e^{124}-e^{134}),$$ $$d\omega_3=2\;\omega_2\wedge\eta + 4(e^{123}+e^{234}).$$

For the examples \ref{ex-prod2} and \ref{ex-prod3}   we get
$d\omega_2=-3\,\omega_3\wedge\eta$ and
$d\omega_3=3\,\omega_2\wedge\eta$, therefore on the product of the corresponding simply-connected Lie groups by $\R$ one gets an SKT $SU(3)$-structure.}
\end{example}

We will study  the existence of  SKT $SU(3)$-structures on  a
Riemannian cone over a $5$-dimensional manifold $N^5$ endowed with
an $SU(2)$-structure  $(\eta, \omega_1, \omega_2, \omega_3)$.  Then
$N^5$ has  an induced  almost contact metric structure  $( I, \xi,
\eta, g)$ and $\omega_1$ is  its fundamental form.

 The Riemannian  cone $(N^5 \times  \R^+, h)$ of $(N^5, g)$  has a natural $SU(3)$-structure defined by
$$
\begin{array}{l}
F = t^2 \omega_1 + t \eta \wedge dt,\\[4pt]
\Psi = t^2 (\omega_2 + i \omega_3) \wedge (t \eta - i dt).
\end{array}
$$
 In terms of a local orthonormal  coframe  $\{ e^1, \ldots, e^5 \}$ for $g$ such that
$$
\omega_1 = - e^{12} - e^{34},  \quad \omega_2 = - e^{13} + e^{24},
\quad  \omega_3 = - e^{14} - e^{23},  \quad \eta = e^5,
$$
we have that
$$
J e^1 = - e^2,  \quad J e^2 = e^1, \quad  J e^3 = - e^4, \quad  J
e^4 = e^3, \quad J (t e^5) =  dt,  \quad J (dt) = -  t e^5.
$$

We recall that the $SU(3)$-structure $(F, \Psi)$ on $N^5 \times
\R^+$ is integrable if and only if  the $SU(2)$-structure $(\eta,
\omega_1, \omega_2, \omega_3)$ on $N^5$  is Sasaki-Einstein, or
equivalently if and only if
$$
d  \eta =  -2  \, \omega_1, \quad d \omega_2 =  - 3 \,  \omega_3
\wedge \eta, \quad d \omega_3 = 3  \, \omega_2  \wedge \eta.
$$

For the Riemannian cones we can prove the following

\begin{corollary} Let $N^5$ be a $5$-dimensional  manifold endowed with an {\rm SU(2)}-structure $(\eta, \omega_1, \omega_2, \omega_3)$. The {\rm SU(3)}-structure $(F, \Psi)$ on the Riemannian cone $(N^5 \times \R^+, h)$ is {\rm SKT} if and only if
\begin{equation} \label{NEWSKTCONE}
\left \{
\begin{array} {l}
-4 \eta \wedge  \omega_1 + 2 I (d \omega_1) -   2  d \eta  \wedge \eta  = d (I (i_{\xi} d \omega_1)), \\[4pt]
d \omega_2 =  3 \, \omega_3 \wedge \eta,\\[4pt]
 d \omega_3 =  - 3 \,  \omega_2  \wedge \eta.
\end{array}
\right.
\end{equation}

\end{corollary}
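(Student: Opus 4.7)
The plan is to reduce the SKT condition, as stated in Definition~\ref{sktSU(3)-structure}, to its two defining parts $d\Psi=0$ and $d(JdF)=0$, and to handle each separately.

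First I would apply Theorem~\ref{Riemcone} directly to the cone $(N^{5}\times\R^{+},h)$ with fundamental $2$-form $F=t^{2}\omega_{1}+t\,\eta\wedge dt$. This is exactly the setup of Theorem~\ref{Riemcone} with $\omega=\omega_{1}$, so that theorem immediately yields that $d(JdF)=0$ is equivalent to normality of $(I,\xi,\eta,g)$ together with the first equation of~\eqref{NEWSKTCONE}.

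Next I would compute $d\Psi$ directly. Setting $\gamma=\omega_{2}+i\omega_{3}$, so that $\Psi=t^{2}\gamma\wedge(t\eta-i\,dt)$, the Leibniz rule together with $d(dt)=0$ decomposes $d\Psi$ into a piece living purely on $N^{5}$ and a piece of the form $(\text{3-form on }N^{5})\wedge dt$. Setting the $dt$-coefficient to zero would yield a condition of the shape $d\gamma=c\,i\,\gamma\wedge\eta$ for a nonzero real constant $c$, and separating real and imaginary parts would produce the second and third equations of~\eqref{NEWSKTCONE}. The pure $N^{5}$-piece reads $d\gamma\wedge\eta+\gamma\wedge d\eta=0$.

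The main obstacle is checking that this $N^{5}$-piece imposes no extra constraint. The summand $d\gamma\wedge\eta$ vanishes because the $dt$-equation forces $d\gamma\wedge\eta=c\,i\,\gamma\wedge\eta\wedge\eta=0$. For the summand $\gamma\wedge d\eta$, normality together with Remark~\ref{deta} ensures $I\,d\eta=d\eta$, so $d\eta$ is of type $(1,1)$ with respect to $I$; since $\gamma$ is of type $(2,0)$, the product $\gamma\wedge d\eta$ would be of type $(3,1)$, and this vanishes because the horizontal distribution $\mathrm{Ker}\,\eta$ has complex dimension only $2$. Combining both parts gives~\eqref{NEWSKTCONE} as the full SKT condition; the remaining bookkeeping consists in tracking $t$-derivatives and the signs coming from wedging with $dt$ carefully, which pins down the value of $c$ and hence the explicit signs in the last two equations.
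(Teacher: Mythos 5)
Your proposal follows the same route as the paper's proof: split the SKT condition into $d\Psi=0$ and $d(JdF)=0$, get the first equation of \eqref{NEWSKTCONE} from Theorem~\ref{Riemcone} applied with $\omega=\omega_1$, and get the last two from the $dt$-component of $d\Psi$. Two remarks. First, your check that the purely horizontal component $d\gamma\wedge\eta+\gamma\wedge d\eta$ of $d\Psi$ imposes nothing new is a genuine addition -- the paper passes over it in silence -- and your argument is sound: $d\gamma\wedge\eta$ dies once $d\gamma$ is proportional to $\gamma\wedge\eta$, and $\gamma\wedge d\eta$ dies because, by normality ($i_\xi d\eta=0$ and $Id\eta=d\eta$ from Remark~\ref{deta}), it is a $(3,1)$-form on the $2$-complex-dimensional horizontal distribution. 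Just be aware that this step, and hence the ``if'' direction, needs normality as an input; in the corollary as printed normality is not listed among the conditions \eqref{NEWSKTCONE}, and it does not obviously follow from them, so strictly one should either add it or note that Theorem~\ref{Riemcone} supplies it only in the ``only if'' direction. Second, when you actually pin down the constant $c$: with the paper's $\Psi=t^2(\omega_2+i\omega_3)\wedge(t\eta-i\,dt)$ the $dt$-component of $d\Psi$ is $(c-3)\,t^2(\omega_2+i\omega_3)\wedge\eta\wedge dt$ for $d\gamma=ci\,\gamma\wedge\eta$, so one gets $d\omega_2=-3\,\omega_3\wedge\eta$ and $d\omega_3=3\,\omega_2\wedge\eta$ -- the signs written in the paper's own proof, which are opposite to those displayed in \eqref{NEWSKTCONE}. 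So your computation will not reproduce the statement as printed; this is an internal inconsistency of the paper (statement versus proof), not an error in your method.
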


\begin{proof} By imposing that $d \Psi =0$ we get  the conditions
$$
d \omega_2 =  -3  \, \omega_3 \wedge \eta, \quad
 d \omega_3 =  3 \,  \omega_2  \wedge \eta.
$$
By imposing   $d (J dF) =0$,  we obtain,   as in the proof of
Theorem  \ref{Riemcone}, the  equation  \eqref{SKTCONE}  for $\omega
= \omega_1$.

\end{proof}

\section{Almost contact metric structure  induced on a hypersurface} \label{inducedhyper}

Here we study the almost contact metric structure induced naturally on any oriented
hypersurface $N^{2n + 1}$ of a $(2n+2)$-manifold $M^{2n+2}$ equipped
with an SKT structure.

Let $f\colon N^{2n + 1} \longrightarrow M^{2n + 2}$ be an oriented
hypersurface of a $(2n + 2)$-dimensional manifold $M^{2n + 2}$
endowed with  an SKT structure $(J, h, F)$ and denote by $\mathbb U$
the unitary normal vector field. It is well known that  $N^{2n + 1}$
inherits an   almost contact metric structure $(I, \xi, \eta, g)$
such that   $\eta$  and the fundamental $2$-form $\omega$ are given
by
\begin{equation}\label{induced-acm-structure}
\eta=-f^*(i_{\mathbb U}F), \quad
 \omega= f^*F,
\end{equation}
where $F$ is the fundamental $2$-form of the almost Hermitian structure (see for instance \cite{Blair2}).

\begin{proposition}\label{hypersurface-anydim}
Let $f\colon N^{2n + 1} \longrightarrow M^{2n + 2}$ be an immersion
of an oriented $(2n + 1)$-dimensional manifold into a $(2n +
2)$-dimensional  Hermitian manifold  $(M^{2n + 2}, J, h)$. If the
 Hermitian structure $(J, h)$  is {\rm SKT}, then the  induced almost contact metric structure $(I, \xi, \eta, g)$  on $N^{2n + 1}$,  with $\eta$ and $\omega$ given
by~\eqref{induced-acm-structure},  satisfies
\begin{equation}\label{SKTcond-anydim}
d \big( Id\omega  - I(f^*(i_{\mathbb U} d F))\wedge \eta \big) =0.
\end{equation}
\end{proposition}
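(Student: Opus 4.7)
The strategy is to pull back the SKT condition $d(JdF)=0$ from $M$ to $N$ via $f^*$. Since $d$ commutes with $f^*$, it suffices to compute $f^*(JdF)$ explicitly and recognize it as the expression inside the derivative in \eqref{SKTcond-anydim}. So the whole problem reduces to an algebraic identity:
\begin{equation*}
f^*(JdF) \;=\; I(d\omega) \;-\; I\!\left(f^*(i_{\mathbb U}dF)\right)\wedge \eta,
\end{equation*}
after which applying $d$ and invoking the hypothesis finishes the argument.

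To establish this identity, the first step is to decompose $J$ along the immersion. For $X$ tangent to $N$, split $J f_*X$ into tangential and normal parts relative to $\mathbb U$. The tangential part is $f_*(IX)$ by the very definition of the induced fundamental form $\omega = f^*F$ combined with $\omega(\cdot,\cdot)=g(\cdot,I\cdot)$. The normal coefficient is $h(J f_*X,\mathbb U) = -h(f_*X, J\mathbb U) = F(f_*X,\mathbb U) = -\eta(X)$, using $\eta = -f^*(i_{\mathbb U}F)$. Hence
\begin{equation*}
Jf_*X \;=\; f_*(IX) - \eta(X)\,\mathbb U.
\end{equation*}

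The second step is to expand $f^*(JdF)(X,Y,Z) = dF(J f_*X, J f_*Y, J f_*Z)$ using this decomposition. All monomials containing two or more copies of $\mathbb U$ vanish by antisymmetry of $dF$, leaving four terms. The pure tangential term is $dF(f_*IX, f_*IY, f_*IZ) = f^*(dF)(IX,IY,IZ)$; since $f^*(dF)=d(f^*F)=d\omega$, this is exactly $I(d\omega)(X,Y,Z)$. Each of the three remaining terms carries one factor of $\mathbb U$ and one factor of some $\eta(\cdot)$; moving $\mathbb U$ to the first slot introduces a sign $(-1)^{k-1}$ and produces $(i_{\mathbb U}dF)(f_*IW_1, f_*IW_2) = I(f^*(i_{\mathbb U}dF))(W_1,W_2)$ for the appropriate tangent arguments. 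Comparing the resulting signed sum with the Leibniz expansion
\begin{equation*}
(\alpha\wedge\eta)(X,Y,Z) = \alpha(X,Y)\eta(Z) - \alpha(X,Z)\eta(Y) + \alpha(Y,Z)\eta(X),
\end{equation*}
for $\alpha = I(f^*(i_{\mathbb U}dF))$, one sees the three terms collapse to $-(\alpha\wedge\eta)(X,Y,Z)$. This yields the identity above.

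The third and last step is to apply $d$ to both sides and use $d\circ f^* = f^*\circ d$ together with $d(JdF)=0$ on $M$ to conclude $d\bigl(I(d\omega)-I(f^*(i_{\mathbb U}dF))\wedge\eta\bigr)=0$. The only real obstacle is keeping track of sign conventions in the expansion of step two: one has to match the paper's convention for the action of $J$ on forms and for the wedge product, which makes the cancellation of the three one-$\mathbb U$ terms into a single $\wedge \eta$ term slightly delicate, but there is no substantive geometric difficulty.
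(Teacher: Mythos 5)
Your proposal is correct and follows essentially the same route as the paper: both arguments reduce the statement to the pullback identity $f^*(JdF)=I(d\omega)-I\bigl(f^*(i_{\mathbb U}dF)\bigr)\wedge\eta$ and then apply $d\circ f^*=f^*\circ d$ together with $d(JdF)=0$; the only difference is that you verify the identity by decomposing $Jf_*X=f_*(IX)-\eta(X)\,\mathbb U$ and expanding the trilinear form, whereas the paper decomposes $dF$ in a local adapted coframe. One cosmetic remark: in your chain $h(Jf_*X,\mathbb U)=-h(f_*X,J\mathbb U)=F(f_*X,\mathbb U)=-\eta(X)$ the second and third equalities each carry a wrong sign (with the paper's conventions $F(f_*X,\mathbb U)=h(f_*X,J\mathbb U)=\eta(X)$), but the two slips cancel and the final decomposition, and hence the proof, is correct.
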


\begin{proof}
We can choose locally an adapted coframe  $\{e^1,\ldots, e^{2n + 2}\}$
for the Hermitian structure  such that the unitary normal vector
field $\mathbb U$ is dual to $e^{2n + 2}$. Since the almost complex
structure $J$ is given in this adapted basis by
$$
\begin{array}{l}
J e^{2j-1}= -e^{2j}, \quad J e^{2j}= e^{2j - 1},  \,  j = 1, \ldots,
n, \\[4pt]
 J e^{2n + 1}= e^{2n + 2},\quad J  e^{2n +2}= -e^{2n + 1},
\end{array}
$$
the tensor field $I$ on $N^{2n + 1}$ satisfies that $I f^* e^i=f^* J e^i$,
 $i=1,\ldots,2n+1$, that is,
$$
I f^* e^{2j-1}= - f^* e^{2j},\quad I f^*e^{2j}= f^* e^{2j-1}, \, j =
1, \ldots, n,  \quad  I f^* e^{2n + 1}= 0.
$$
However, $I f^* e^{2n + 2} =0 \not= f^* e^{2n + 1} = - f^* J e^{2n +
2}$.

Now we compute $f^* J dF$. First we decompose (locally and in terms
of the adapted basis) the differential of $F$ as follows:
$$
dF=\alpha + \beta\wedge e^{2n + 1} + \gamma\wedge e^{2n + 2} +
\mu\wedge e^{2 n + 1} \wedge e^{2n + 2},
$$
where the local forms $\alpha\in \bigwedge^3<e^1, \ldots,  e^{2n}>$,
$\beta,\gamma\in \bigwedge^2<e^1, \ldots,  e^{2n}>$ and $\mu\in
\bigwedge^1<e^1, \ldots,  e^{2n}>$ are generated only by $e^1,
\ldots, e^{2n}$. Then,
$$
JdF=J\alpha + J\beta \wedge e^{2n + 2} - J \gamma \wedge e^{2n + 1}
+ J\mu \wedge e^{2n + 1}\wedge e^{2n + 2}.
$$
Since $f^* e^{2n + 2}=0$ and using that $f^* e^{2n + 1}=\eta$, we
get
$$
f^*JdF= f^*J\alpha - (f^*J \gamma) \wedge \eta.
$$
But $f^* (i_{\mathbb U} dF) = f^* \gamma + f^* \mu \wedge \eta$,
which implies that
$$
I(f^* (i_{\mathbb U} dF))= I f^* \gamma = f^* J \gamma.
$$

On the other hand,
$$
Id\omega= Id f^* F = I f^* dF = I f^*\alpha = f^* J \alpha.
$$

We conclude that
$$
f^* J dF = f^*J\alpha - (f^*J \gamma) \wedge \eta = Id\omega -
I(f^* (i_{\mathbb U} d F))\wedge \eta.
$$
Now, if the  Hermitian structure is SKT, then $J dF$ is closed and
the induced structure satisfies~\eqref{SKTcond-anydim}.
\end{proof}

\begin{remark} \label{remarkhypers}
{\rm  Notice that using that $i_{\mathbb U} d F =
\mathcal{L}_{\mathbb U} F - d i_{\mathbb U} F$ we can write
(\ref{SKTcond-anydim}) as
$$
d \big( Id\omega - I( f^* (\mathcal{L}_{\mathbb U} F) + d
\eta)\wedge \eta \big) =0.
$$
Therefore, if $f^* (\mathcal{L}_{\mathbb U} F) =0$, the induced
almost contact metric structure has to satisfy the equation
$$
d \big( Id\omega - I (d \eta)\wedge \eta \big) =0.
$$
In the case of the product $N^{2n + 1}  \times \R$ the condition
$f^* (\mathcal{L}_{\mathbb U} F) =0$ is satisfied.

In the case of the Riemannian cone we have that
$$\mathcal{L}_{\frac{d}{dt}} F = 2 t \omega + dt \wedge \eta,$$
and therefore we get $f^* (\mathcal{L}_{\frac{d}{dt}} F) = 2 \omega$.

In this way we recover some of the equations obtained in Corollary \ref{product} and in Theorem \ref{Riemcone}.}
\end{remark}

Now  we study the structure induced naturally on any oriented
hypersurface $N^5$ of a $6$-manifold $M^6$ equipped with an SKT
$\SU(3)$-structure.

Let $f\colon N^5 \longrightarrow M^6$ be an oriented hypersurface of
a $6$-manifold $M^6$ endowed with an $\SU(3)$-structure
$(F,\Psi=\Psi_++i\,\Psi_-)$ and denote by $\mathbb U$ the unitary
normal vector field. Then $N^5$ inherits an $\SU(2)$-structure
$(\eta,\omega_1,\omega_2,\omega_3)$ given by
\begin{equation}\label{induced-SU(2)-structure}
\eta=-f^*(i_{\mathbb U}F), \quad
 \omega_1= f^*F, \quad \omega_2=-f^*(i_{\mathbb U} \Psi_-), \quad
 \omega_3=f^*(i_{\mathbb U} \Psi_+). \quad
\end{equation}

As a consequence of Proposition \ref{hypersurface-anydim} we have
the following

\begin{corollary}\label{hypersurface}
Let $f\colon N^5\longrightarrow M^6$ be an immersion of an oriented
$5$-dimensional manifold into a $6$-dimensional manifold with an
$\SU(3)$-structure. If the $\SU(3)$-structure is {\rm SKT}, then the
induced $\SU(2)$-structure on $N^5$ given
by~\eqref{induced-SU(2)-structure} satisfies
\begin{equation}\label{SKTcond-1}
d \big( Id\omega_1 - If^*(i_{\mathbb U} d F)\wedge \eta \big) =0,
\end{equation}
and
\begin{equation}\label{SKTcond-2}
d(\omega_2\wedge\eta)=0,\qquad d(\omega_3\wedge\eta)=0.
\end{equation}
\end{corollary}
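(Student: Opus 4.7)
My plan is to split the two conclusions of the corollary according to the two halves of the SKT SU(3) condition \eqref{sktSU(3)-conditions}: closedness of $JdF$ will produce \eqref{SKTcond-1}, while closedness of $\Psi$ will produce \eqref{SKTcond-2}. These two halves are logically independent.

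The first identity \eqref{SKTcond-1} should follow as a direct application of Proposition~\ref{hypersurface-anydim}. Indeed, the induced SU(2)-data \eqref{induced-SU(2)-structure} contains as subdata an almost contact metric structure $(I,\xi,\eta,g)$ on $N^5$ whose fundamental 2-form is precisely $\omega_1 = f^*F$. Applying Proposition~\ref{hypersurface-anydim} to this induced structure (with $\omega$ replaced by $\omega_1$), and using the hypothesis that $(F,\Psi)$ is SKT on $M^6$, yields \eqref{SKTcond-1} immediately.

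For \eqref{SKTcond-2}, I will pass to a local unitary adapted coframe $\{e^1,\ldots,e^6\}$ on $M^6$ near $N^5$ with $e^6$ dual to $\mathbb U$, arranged so that $F = e^{12}+e^{34}+e^{56}$ and, after a phase rotation of $\Psi$ if necessary, $\Psi = (e^1+ie^2)\wedge(e^3+ie^4)\wedge(e^5+ie^6)$. Expanding this product gives
\begin{align*}
\Psi_+ &= (e^{13}-e^{24})\wedge e^5 \;-\; (e^{14}+e^{23})\wedge e^6,\\
\Psi_- &= (e^{14}+e^{23})\wedge e^5 \;+\; (e^{13}-e^{24})\wedge e^6.
\end{align*}
Pulling back by $f$ kills every $e^6$-component, while contracting with $\mathbb U$ isolates exactly those components. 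Comparing with \eqref{induced-SU(2)-structure}, under which $\omega_2$ and $\omega_3$ are (up to sign) $e^{13}-e^{24}$ and $e^{14}+e^{23}$ on $N^5$ and $\eta$ is (up to sign) $e^5$, I obtain the identifications
\[
f^*\Psi_+ = \pm\,\omega_2\wedge\eta, \qquad f^*\Psi_- = \pm\,\omega_3\wedge\eta.
\]
Since the SKT condition on the SU(3)-structure forces $d\Psi_+ = d\Psi_- = 0$, and $d$ commutes with $f^*$, the identities \eqref{SKTcond-2} follow at once.

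The only fiddly point is bookkeeping the signs arising from the conventions in \eqref{induced-SU(2)-structure} and the orientation of the adapted coframe; this is entirely routine, and the essential content of the argument is the near-tautological identification, after pullback, of $\omega_2\wedge\eta$ and $\omega_3\wedge\eta$ with $\pm f^*\Psi_+$ and $\pm f^*\Psi_-$.
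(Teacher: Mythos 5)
Your proposal is correct and follows essentially the same route as the paper: equation \eqref{SKTcond-1} is obtained by applying Proposition~\ref{hypersurface-anydim} with $\omega=\omega_1$, and \eqref{SKTcond-2} by observing that, in an adapted coframe with $e^6$ dual to $\mathbb U$, the definitions \eqref{induced-SU(2)-structure} give $\omega_2\wedge\eta=f^*\Psi_+$ and $\omega_3\wedge\eta=f^*\Psi_-$ (the paper fixes the signs by its conventions), so closedness of $\Psi$ and naturality of $d$ under $f^*$ conclude. The only difference is that you carry out the coframe expansion explicitly where the paper states the identification directly.
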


\begin{proof}
The equation \eqref{SKTcond-1} follows by Proposition
\ref{hypersurface-anydim} taking $\omega = \omega_1$. We can choose
locally an adapted coframe $\{e^1,\ldots,e^5,e^6\}$ for the
$SU(3)$-structure such that the unitary normal vector field $\mathbb
U$ is dual to $e^6$. From~\eqref{induced-SU(2)-structure} it follows
that $\omega_2\wedge\eta= f^* \Psi_+$ and $\omega_3\wedge\eta= f^*
\Psi_-$. Now, if $\Psi=\Psi_+ + i\, \Psi_-$ is closed then the
induced structure satisfies~\eqref{SKTcond-2}.
\end{proof}

\subsection{A simple example}\label{example}

Consider the $6$-dimensional nilmanifold $M^6$
whose  underlying nilpotent Lie algebra has structure equations
$$
\left \{ \begin{array}{l}
d e^j = 0, j = 1,2,3, 6,\\
d e^4 = e^{12},\\
d e^5 = e^{14},
\end{array}
\right.
$$
and it is endowed with the $SU(3)$-structure given by
$$
F = -e^{14} - e^{26} - e^{53}, \quad \Psi = (e^1 - i e^4) \wedge (e^2 - i e^6) \wedge (e^5 - i e^3).
$$
The oriented hypersurface with normal vector field  dual to $e^2$ is
a $5$-dimensional nilmanifold $N^5$, which has  by~\cite{CS} no
invariant hypo structures,  but  the $\SU(2)$-structure on $N^5$
 \begin{equation} \label{su2structex}
\eta = e^{2},\qquad \omega_1 = -e^{14}-e^{53},\qquad \omega_2 =
-e^{15}-e^{34},\qquad \omega_3 = -e^{13}-e^{45},
\end{equation}
satisfies (\ref{SKTcond-1}) and (\ref{SKTcond-2}).
In section \ref{evolutioneq} we will  show that by using this  $SU(2)$-structure and  appropriate evolution
equations  we can  construct an SKT $SU(3)$-structure  on the product of $N^5$ with an open interval.

\section{SKT evolution equations} \label{evolutioneq}

The goal here is to construct SKT $\SU(3)$-structures by means of
appropriate evolution equations starting from a suitable  $SU(2)$-structure on
a $5$-dimensional manifold, following ideas of \cite{H} and \cite{CS}.

\begin{lemma}\label{evolution-eqs-1}
Let $(\eta(t),\omega_1(t),\omega_2(t),\omega_3(t))$ be a family of
$\SU(2)$-structures on a $5$-dimensional manifold $N^5$, for $t\in (a,b)$. Then, the
$\SU(3)$-structure on $M^6=N^5\times (a,b)$ given by
$$
F=\omega_1(t)+\eta(t)\wedge dt, \quad\quad \Psi = (\omega_2(t) + i
\omega_3(t))\wedge(\eta(t) -  i dt),
$$
satisfies the condition  $d \Psi=0$ if and only if
$(\eta(t),\omega_1(t),\omega_2(t),\omega_3(t))$ is an
$\SU(2)$-structure such that
\begin{equation}\label{totslsystem}
\begin{array}{l}
\hat d(\omega_2(t)\wedge\eta(t))=0,\quad \hat d(\omega_3(t)\wedge\eta(t))=0,\\[5pt]
\partial_t(\omega_2(t)\wedge\eta(t))=-\hat{d} \omega_3(t) ,\quad
\partial_t(\omega_3(t)\wedge\eta(t))=\hat{d} \omega_2(t),
\end{array}
\end{equation}
hold, for any $t$ in the open interval $(a, b)$.
\end{lemma}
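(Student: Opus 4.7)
The plan is a direct computation: decompose the total exterior derivative on $M^6=N^5\times(a,b)$ along the $t$-direction, expand $\Psi$ into a ``horizontal'' part and a part containing $dt$, and then match coefficients.

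First, I would fix the following notation. Writing any form $\Omega$ on $M^6$ uniquely as $\Omega=\alpha+\beta\wedge dt$, where $\alpha(t)$ and $\beta(t)$ are $t$-dependent forms on $N^5$ not involving $dt$, one has
\[
d\Omega=\hat d\alpha+(-1)^{\deg\beta+1}\,dt\wedge\hat d\beta+dt\wedge\partial_t\alpha,
\]
so that $d\Omega=0$ is equivalent to the two independent conditions $\hat d\alpha=0$ and $\partial_t\alpha=(-1)^{\deg\beta}\hat d\beta$. This identity is the only tool needed; everything else is bookkeeping.

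Next I would apply this to $\Psi$. Expanding
\[
\Psi=(\omega_2(t)+i\,\omega_3(t))\wedge(\eta(t)-i\,dt)
\]
and collecting real and imaginary parts yields
\[
\Psi_+=\omega_2\wedge\eta-\omega_3\wedge dt,\qquad \Psi_-=-\omega_3\wedge\eta-\omega_2\wedge dt
\]
(up to the overall sign convention used in the lemma). Applying the formula above to $\Psi_+$ with $\alpha=\omega_2\wedge\eta$ and $\beta=-\omega_3$ gives $\hat d(\omega_2\wedge\eta)=0$ and $\partial_t(\omega_2\wedge\eta)=-\hat d\omega_3$. Applying it to $\Psi_-$ with $\alpha=-\omega_3\wedge\eta$ and $\beta=-\omega_2$ gives $\hat d(\omega_3\wedge\eta)=0$ and $\partial_t(\omega_3\wedge\eta)=\hat d\omega_2$. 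Conversely, if the four equations in \eqref{totslsystem} hold, then $d\Psi_+$ and $d\Psi_-$ each split into a horizontal piece and a $dt$-piece that both vanish identically, so $d\Psi=0$.

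The only real obstacle is sign-tracking in the wedge products involving $dt$ (the degree of $\beta$ determines a sign when moving $dt$ past $\hat d\beta$), together with fixing the conventions so that the overall signs in \eqref{totslsystem} come out as stated; once the decomposition $d=\hat d+dt\wedge\partial_t$ is set up carefully and the uniqueness of the splitting $\Omega=\alpha+\beta\wedge dt$ is used, the lemma follows with no further input. Note that there is no need to check that $(\eta(t),\omega_1(t),\omega_2(t),\omega_3(t))$ remains an $\SU(2)$-structure along the flow---this is part of the hypothesis---so the statement truly reduces to the closedness equation $d\Psi=0$ alone.
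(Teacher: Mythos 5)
The paper states Lemma \ref{evolution-eqs-1} without proof, so there is no argument of the authors to compare against; your route --- splitting $d=\hat d+dt\wedge\partial_t$, writing $\Omega=\alpha+\beta\wedge dt$ and matching the horizontal and $dt$-components --- is exactly the computation the paper performs for $dF$ in the proof of Proposition \ref{evolution-eqs-2}, and your master identity ($d\Omega=0$ iff $\hat d\alpha=0$ and $\partial_t\alpha=(-1)^{\deg\beta}\hat d\beta$) is correct.

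The problem is that the one step carrying all the content of this lemma, the sign bookkeeping, is wrong. Expanding the displayed form gives
$$
(\omega_2+i\,\omega_3)\wedge(\eta-i\,dt)=\omega_2\wedge\eta+\omega_3\wedge dt+i\,(\omega_3\wedge\eta-\omega_2\wedge dt),
$$
since $i\,\omega_3\wedge(-i\,dt)=+\,\omega_3\wedge dt$. So $\Psi_+=\omega_2\wedge\eta+\omega_3\wedge dt$ and $\Psi_-=\omega_3\wedge\eta-\omega_2\wedge dt$, not the expressions you wrote; note your $\Psi_-=-(\omega_3\wedge\eta+\omega_2\wedge dt)$ is not even the correct one up to an overall sign. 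Feeding the correct $\Psi_\pm$ into your identity yields $\partial_t(\omega_2\wedge\eta)=+\hat d\omega_3$ and $\partial_t(\omega_3\wedge\eta)=-\hat d\omega_2$, i.e.\ the opposite signs to \eqref{totslsystem}; the printed system corresponds instead to $(\omega_2+i\,\omega_3)\wedge(\eta+i\,dt)$. In other words, the paper itself has a sign inconsistency between its formula for $\Psi$ and the equations \eqref{totslsystem} (the worked example after Theorem \ref{evolution-eqs-3} satisfies \eqref{totslsystem} as printed), and your ``up to the overall sign convention'' hedge lands on the printed equations only because of a compensating error in the complex expansion, rather than by identifying and resolving the discrepancy. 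Since, as you say yourself, sign-tracking is the only real obstacle here, you should either redo the expansion correctly and record that the signs in \eqref{totslsystem} then come out reversed, or state explicitly that you are taking $\Psi=(\omega_2+i\,\omega_3)\wedge(\eta+i\,dt)$. The remainder of the argument (the equivalence in both directions via uniqueness of the splitting $\Omega=\alpha+\beta\wedge dt$) is fine.
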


Here $\hat{d}$ denotes the exterior differential on $N^5$
 and $d$ the exterior differential on $M^6$.    Now we show which are the additional evolution equations to
add to the last two equations of \eqref{totslsystem} to ensure that $dJdF=0$.

\begin{proposition}\label{evolution-eqs-2}
Let $(\eta(t),\omega_1(t),\omega_2(t),\omega_3(t))$ be a family of
$\SU(2)$-structures on $N^5$, for $t\in (a,b)$. Then, the
$\SU(3)$-structure on $M^6=N^5\times (a,b)$ given by
\begin{equation}\label{SU(3)-on-NxI-bis}
F=\omega_1(t)+\eta(t)\wedge dt, \quad\quad \Psi = (\omega_2(t) + i
\omega_3(t))\wedge(\eta(t)- i dt),
\end{equation}
satisfies that $JdF$ is closed if and only if
$(\eta(t),\omega_1(t),\omega_2(t),\omega_3(t))$ satisfies the
following evolution equations
\begin{equation}\label{evolution-SKT}
\left\{\begin{array}{l}
\hat{d} \Big( I_t  \hat{d} \omega_1(t) - I_t (\partial_t \omega_1(t) + \hat{d} \eta(t))\wedge \eta(t) \Big)=0 ,\\[5pt]
\partial_t \Big( I_t  \hat{d} \omega_1(t) - I_t (\partial_t \omega_1(t) + \hat{d} \eta(t))\wedge \eta(t) \Big) =\\[5pt] -
\hat{d} \Big( I_t ( i_{\xi} \hat{d} \omega_1(t)) - I_t ( i_{\xi}
(\partial_t \omega_1(t) + \hat{d} \eta(t)))\wedge \eta(t) \Big) ,
\end{array} \right.
\end{equation}
where, for each $t\in (a,b)$, $\xi(t)$ denotes the vector field on
$N^5$ dual to $\eta(t)$.
\end{proposition}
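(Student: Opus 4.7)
My plan is to compute $JdF$ explicitly in terms of the time-dependent $SU(2)$-data on $N^5$, then expand $d(JdF)$ using the bigraded decomposition of forms on $M^6 = N^5\times (a,b)$, and finally read off the vanishing condition piece by piece.

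First, using $d = \hat{d} + dt\wedge\partial_t$ on time-dependent forms pulled back from $N^5$, one gets immediately
$$
dF = \hat{d}\omega_1(t) + \Lambda(t)\wedge dt, \qquad \Lambda(t) := \partial_t\omega_1(t) + \hat{d}\eta(t).
$$
The heart of the proof is then to derive, as in the calculation of $Jd\omega$ in the proof of Theorem~\ref{Riemcone}, a formula for the action of $J$ on a form on $N^5$. At each $t\in(a,b)$, the almost complex structure $J$ on $M^6$ is determined by the $SU(2)$-structure: $J|_{\ker\eta(t)} = I_t$, $J\xi(t) = -\frac{d}{dt}$ and $J\frac{d}{dt} = \xi(t)$, which gives on the dual $J\eta(t) = dt$ and $J(dt) = -\eta(t)$. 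For a $k$-form $\phi$ on $N^5$, decomposing $\phi = (\phi - \eta\wedge i_{\xi(t)}\phi) + \eta\wedge i_{\xi(t)}\phi$ and using that $J$ distributes over wedges of $1$-forms, a direct check yields
$$
J\phi = I_t\phi + (-1)^{k-1}\, I_t\bigl(i_{\xi(t)}\phi\bigr)\wedge dt.
$$

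Applying this formula with $k=3$ to $\hat{d}\omega_1(t)$ (plus sign) and with $k=2$ to $\Lambda(t)$ (minus sign), and using $J(\Lambda\wedge dt) = J\Lambda \wedge J(dt) = -J\Lambda \wedge \eta(t)$, a short computation gives
$$
JdF = A(t) + B(t)\wedge dt,
$$
where
$$
A(t) = I_t\hat{d}\omega_1(t) - I_t\Lambda(t)\wedge\eta(t),\qquad B(t) = I_t\bigl(i_{\xi(t)}\hat{d}\omega_1(t)\bigr) - I_t\bigl(i_{\xi(t)}\Lambda(t)\bigr)\wedge\eta(t),
$$
which are exactly the two expressions appearing inside the $\hat{d}$'s of \eqref{evolution-SKT}.

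For the last step, I take $d$ on both sides, again using $d = \hat{d} + dt\wedge\partial_t$ and the fact that $d(dt) = 0$:
$$
d(JdF) = \hat{d}A(t) + \bigl(\partial_t A(t) + \hat{d}B(t)\bigr)\wedge dt.
$$
The two summands lie in independent subspaces of $\Omega^4(M^6)$ (forms with no $dt$-factor versus forms containing $dt$), so $d(JdF)=0$ if and only if $\hat{d}A(t)=0$ and $\partial_t A(t) = -\hat{d}B(t)$, which is exactly the system \eqref{evolution-SKT}. The main subtlety of the argument is the degree-dependent sign $(-1)^{k-1}$ in the formula for $J$ on forms on $N^5$: the proof of Theorem~\ref{Riemcone} only required the $k=3$ case (with a plus sign), while here I also need the $k=2$ case for $\Lambda(t)$ (which appears with a minus sign), and getting this sign right is what produces the correct expressions for $A(t)$ and $B(t)$.
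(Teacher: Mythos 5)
Your proof is correct and follows essentially the same route as the paper's: the same decomposition $dF=\hat d\omega_1+\Lambda\wedge dt$, the same degree-dependent formula for $J$ acting on forms pulled back from $N^5$ (your $(-1)^{k-1}$ is exactly the paper's $-(-1)^k$, obtained there from the local decomposition $\tau=\alpha+\beta\wedge\eta$ rather than your $\phi=(\phi-\eta\wedge i_\xi\phi)+\eta\wedge i_\xi\phi$), the same resulting expression $JdF=A(t)+B(t)\wedge dt$, and the same separation of $d(JdF)$ into its $dt$-free and $dt$-components. The only point worth noting is that your last display $d(JdF)=\hat dA+(\partial_t A+\hat dB)\wedge dt$ adopts the same ordering convention for the term $\partial_t A\wedge dt$ as the paper does (with the literal rule $d=\hat d+dt\wedge\partial_t$ one would write $dt\wedge\partial_t A=-\partial_t A\wedge dt$ on the $3$-form $A$), so you reproduce the paper's system \eqref{evolution-SKT} verbatim.
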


\begin{proof}
Since $F=\omega_1(t)+\eta(t)\wedge dt$, we have that
$$
dF=\hat{d}\omega_1 + (\partial_t \omega_1 + \hat{d}\eta)\wedge dt.
$$

Let $\{ e^1(t),\ldots, e^4(t),\eta(t) \}$ be a local adapted basis
for the SU(2)-structure
$(\eta(t),\omega_1(t),\omega_2(t),\omega_3(t))$. Then $\{
e^1(t),\ldots, e^4(t),\eta(t),dt \}$ is an adapted basis for the
SU(3)-structure (\ref{SU(3)-on-NxI-bis}) and $J$ is given by
$$
J e^1(t) = -e^2(t),\  J e^2(t) = e^1(t),\  J e^3(t) = -e^4(t),\  J
e^4(t) = e^3(t),$$ $$J \eta(t) = dt,\  J dt = -\eta(t).
$$
Then, the structures $I_t$ induced on $N^5$ for each $t$ are given
by
$$
I_t  e^1(t) = -e^2(t),\  I_t  e^2(t) = e^1(t),\  I_t  e^3(t) =
-e^4(t),\  I e^4(t) = e^3(t),\  I_t  \eta(t) = 0.
$$

Now, given $\tau(t)\in \Omega^k(N^5)$, $t\in (a,b)$, we can decompose
it locally as
$$
\tau(t)=\alpha(t) + \beta(t)\wedge \eta(t),
$$
where $\alpha(t)\in \bigwedge^k <e^1(t),\ldots,e^4(t)>$ and
$\beta(t)\in \bigwedge^{k-1} <e^1(t),\ldots,e^4(t)>$. Therefore
$$
J \tau(t)= J\alpha(t) + J\beta(t)\wedge J\eta(t)= I_t \alpha(t) +
I_t \beta(t)\wedge dt = I_t \tau(t) - (-1)^k I_t
(i_{\xi(t)}\tau(t))\wedge dt.
$$
Applying this to $JdF$ we get

\smallskip

$JdF= J\hat{d}\omega_1 - J(\partial_t \omega_1 + \hat{d}\eta)\wedge
\eta(t) $

\smallskip

\hskip.8cm $= I_t \hat{d}\omega_1 - I_t (\partial_t \omega_1 +
\hat{d}\eta)\wedge \eta(t) + I_t (i_{\xi(t)} \hat{d}\omega_1)\wedge
dt - I_t \Big( i_{\xi} (\partial_t \omega_1 + \hat{d}
\eta)\Big)\wedge \eta(t) \wedge dt $.

Finally, taking the differential of $JdF$ we get

\smallskip

$dJdF= \hat{d}\Big(I_t \hat{d}\omega_1 - I_t (\partial_t \omega_1 +
\hat{d}\eta)\wedge \eta(t)\Big) + \partial_t\Big(I_t \hat{d}\omega_1
- I_t (\partial_t \omega_1 + \hat{d}\eta)\wedge \eta(t)\Big)\wedge
dt$

\smallskip

\hskip1cm $ +\, \hat{d}\Big[ I_t (i_{\xi(t)} \hat{d}\omega_1) - I_t
\Big( i_{\xi} (\partial_t \omega_1 + \hat{d} \eta)\Big)\wedge
\eta(t) \Big] \wedge dt $.
\end{proof}

\begin{remark}
{\rm Observe that the first equation in (\ref{evolution-SKT}) is
exactly condition (\ref{SKTcond-1}) for $F=\omega_1(t)
+\eta(t)\wedge dt$ (see Remark \ref{remarkhypers}). }
\end{remark}

As a consequence of Lemma~\ref{evolution-eqs-1} and
Proposition~\ref{evolution-eqs-2}, we get

\begin{theorem}\label{evolution-eqs-3}
Let $(\eta(t),\omega_1(t),\omega_2(t),\omega_3(t))$, $t\in (a,b)$,  be a family of
$\SU(2)$-structures on a $5$-dimensional manifold $N^5$, such that
\begin{equation}\label{cond-hypo-type}
\hat d(\omega_2(t)\wedge\eta(t))=0,\quad \hat
d(\omega_3(t)\wedge\eta(t))=0,
\end{equation} for any $t$.
If the following evolution equations
\begin{equation}\label{evolution-SKT-bis}
\left\{\begin{array}{l}
\hat{d} \Big( I_t  \hat{d} \omega_1(t) - I_t (\partial_t \omega_1(t) + \hat{d} \eta(t))\wedge \eta(t) \Big)=0 ,\\[7pt]
\partial_t \Big( I_t  \hat{d} \omega_1(t) - I_t (\partial_t \omega_1(t) + \hat{d} \eta(t))\wedge \eta(t) \Big) =\\[7pt]
- \hat{d} \Big( I_t ( i_{\xi} \hat{d} \omega_1(t)) - I_t ( i_{\xi}
(\partial_t
\omega_1(t) + \hat{d} \eta(t)))\wedge \eta(t) \Big) ,\\[7pt]
\partial_t(\omega_2(t)\wedge\eta(t))=-\hat{d} \omega_3(t) ,\\[7pt]
\partial_t(\omega_3(t)\wedge\eta(t))=\hat{d} \omega_2(t) ,
\end{array} \right.
\end{equation}
are satisfied, then the $\SU(3)$-structure on $M=N\times (a,b)$
given by
\begin{equation}\label{SU(3)-on-NxI-tres}
F=\omega_1(t)+\eta(t)\wedge dt, \quad\quad \Psi = (\omega_2(t) + i
\omega_3(t))\wedge(\eta(t)- i dt),
\end{equation}
is {\rm SKT}.
\end{theorem}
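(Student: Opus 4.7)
The plan is to obtain the two defining conditions of an SKT $SU(3)$-structure, namely $d\Psi=0$ and $d(JdF)=0$, by invoking the two preceding technical results and checking that the hypotheses of each are precisely the ones packaged in the statement.

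First I would handle $d\Psi=0$. By Lemma \ref{evolution-eqs-1}, the closure of $\Psi=(\omega_2(t)+i\omega_3(t))\wedge(\eta(t)-i\,dt)$ is equivalent to the system \eqref{totslsystem}, i.e.\ to the two ``hypo-type'' conditions $\hat d(\omega_2(t)\wedge\eta(t))=0$, $\hat d(\omega_3(t)\wedge\eta(t))=0$, together with the evolution equations $\partial_t(\omega_2(t)\wedge\eta(t))=-\hat d\omega_3(t)$ and $\partial_t(\omega_3(t)\wedge\eta(t))=\hat d\omega_2(t)$. The first pair is exactly hypothesis \eqref{cond-hypo-type}, while the second pair is the last two lines of \eqref{evolution-SKT-bis}. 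So Lemma \ref{evolution-eqs-1} applies verbatim and yields $d\Psi=0$.

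Next I would handle $d(JdF)=0$. By Proposition \ref{evolution-eqs-2}, closedness of $JdF$ for $F=\omega_1(t)+\eta(t)\wedge dt$ is equivalent to the evolution system \eqref{evolution-SKT}, which coincides word-for-word with the first two lines of \eqref{evolution-SKT-bis}. Thus the proposition applies and gives $d(JdF)=0$.

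Combining the two conclusions, the $SU(3)$-structure $(F,\Psi)$ on $N^5\times(a,b)$ defined by \eqref{SU(3)-on-NxI-tres} satisfies both conditions of Definition \ref{sktSU(3)-structure} and is therefore SKT, as required. In this argument there is essentially no new computation to perform: the two prior statements were designed exactly so that their hypotheses combine into \eqref{evolution-SKT-bis}, and the only thing to verify is this matching of hypotheses. The one point requiring a little care is that the family $(\eta(t),\omega_1(t),\omega_2(t),\omega_3(t))$ is assumed to be an $SU(2)$-structure \emph{for every} $t\in(a,b)$, which ensures that the local adapted bases $\{e^1(t),\ldots,e^4(t),\eta(t)\}$ and the tensors $I_t$ used in the proofs of Lemma \ref{evolution-eqs-1} and Proposition \ref{evolution-eqs-2} are well defined throughout the evolution; with this in hand, no further obstacle arises.
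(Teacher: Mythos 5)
Your proof is correct and follows exactly the route the paper takes: the paper derives Theorem \ref{evolution-eqs-3} simply as a consequence of Lemma \ref{evolution-eqs-1} (giving $d\Psi=0$ from \eqref{cond-hypo-type} and the last two equations of \eqref{evolution-SKT-bis}) and Proposition \ref{evolution-eqs-2} (giving $d(JdF)=0$ from the first two), which is precisely your argument. The matching of hypotheses you verify is all that is needed.
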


\begin{example}
{\rm Let us consider the Lie algebra with structure equations
$$
\left \{ \begin{array}{l}
d e^j = 0, j = 1,2,3,\\
d e^4 = e^{12},\\
d e^5 = e^{14},
\end{array}
\right.
$$
underlying the $5$-dimensional nilmanifold $N^5$ considered in Example \ref{example} and endowed
with the
$SU(2)$-structure given by \eqref{su2structex}.
It is
straight forward to verify that
$$d(\omega_2\wedge\eta)=d(\omega_3\wedge\eta)=d(\omega_1\wedge\omega_1)=0.
$$
Let us evolve the previous SU(2)-structure in the following way:
$$
\begin{array}{lll}
\omega_1(t)&=& -e^{14}-e^{53},\\[5pt]
\omega_2(t)&=&-\left(1+\frac32t\right)^{1/3}\,e^{15} - \left(1+\frac32t\right)^{-1/3}\,e^{34},\\[5pt]
\omega_3(t)&=&-\left(1+\frac32t\right)^{1/3}\,e^{13}-\left(1+\frac32t\right)^{-1/3}\,e^{45},\\[5pt]
\eta(t)&=&\left(1+\frac32t\right)^{1/3}\,e^{2},
\end{array}
$$ where $t\in(-2/3,\infty)$.

It is immediate to observe that the family $(\omega_1(t),\,\omega_2(t),\,\omega_3(t),\,\eta(t))$ verifies equations \eqref{cond-hypo-type} and the two last equations in \eqref{evolution-SKT-bis} for any $t\in(-2/3,\infty)$.  Moreover, it verifies the following conditions:
$$\partial_t\omega_1(t)=0,\quad \hat d(\eta(t))=0,\quad i_{\xi}\left(\hat d(\omega_1(t))\right)=0,\quad \partial_t\left(I_t(\hat d \omega_1(t))\right)=0,$$ which implies that the evolution equations \eqref{evolution-SKT} are also satisfied.

On the product $N^5\times\R$ let us consider the local basis of $1$-forms given by
$$\begin{array}{l}\beta^1=\left(1+\frac32t\right)^{1/3}\,e^{1},\quad \beta^2=\left(1+\frac32t\right)^{-1/3}\,e^{4},\quad \beta^3=e^5,\quad \beta^4=e^3,\\[4pt] \beta^5=\left(1+\frac32t\right)^{1/3}\,e^{2},\quad \beta^6=dt.\end{array}$$

The structure equations are:
$$\begin{cases}\begin{array}{l}
d\beta^1\,=\,-\frac12\,\left(1+\frac32t\right)^{-1}\,\beta^{16},\\[5pt]
d\beta^2\,=\,\left(1+\frac32t\right)^{-1}\,\left(\beta^{15}+\frac12\,\beta^{26}\right),\\[5pt]
d\beta^3\,=\,\beta^{12},\\[5pt]
d\beta^4\,=\,0,\\[5pt]
d\beta^5\,=\,-\frac12\,\left(1+\frac32t\right)^{-1}\,\beta^{56},\\[5pt]
d\beta^6\,=\,0.
\end{array}\end{cases}$$
$J$ is given locally by $J\beta^1=-\beta^2,\quad
J\beta^3=-\beta^4,\quad J\beta^5=\beta^6.$  The fundamental form
$F=-\beta^{12}-\beta^{34}+\beta^{56}$ verifies that $d(JdF)=0$ and
the $(3,0)$-form
$\Psi=(\beta^1+i\,\beta^2)\wedge(\beta^3+i\,\beta^4)\wedge(\beta^5-i\,\beta^6)$
is closed.  Therefore, $(F,\Psi)$ is a local SKT SU(3)-structure on
$N^5\times\R$.

 }
\end{example}

\medskip

\smallskip

\begin{remark}
{\rm A Hermitian structure $(J,h)$ on a $6$-dimensional manifold
$M^6$ is called {\em balanced} if $F \wedge F$ is closed, $F$ being
the associated fundamental $2$-form. In~\cite{FTUV} it was
introduced the notion of balanced SU(2)-structures on
$5$-dimensional manifolds, together with appropriate evolution
equations whose solution gives rise to a balanced SU(3)-structure in
six dimensions.

If $M^6$ is compact, then a balanced structure cannot be SKT (see
for instance~\cite{FPS}).

The   SU(2)-structure \eqref{su2structex} on the previous example is
also balanced and it gives rise to a  balanced metric on the product
of $N^5$  with a open interval (see (11) in~\cite{FTUV}). However
one can check directly that this solution is not SKT.}
\end{remark}

Notice that if $G$ is the nilpotent  Lie group  underlying $N^5$, the product $G \times \R$ has no left-invariant SKT structures and
it does not admit any left-invariant complex structures; however we
find a local  SKT SU(3)-structure on it.

\section{HKT structures}

In this section we will find conditions for which an $S^1$-bundle over a $(4n + 3)$-dimensional manifold endowed with three almost contact metric structures is hyper-K\"ahler with torsion (HKT for short). We recall that a $4n$-dimensional  hyper-Hermitian manifold $(M^{4n}, J_1, J_2, J_3, h)$ is a hypercomplex manifold $(M^{4n}, J_1, J_2, J_3)$ endowed with a Riemannian metric $h$ which is compatible with the complex structures $J_r$, $r =1,2,3$, i.e. such  that
$$
h(J_r X, J_r Y) = h (X, Y),
$$
for any $r = 1,2,3$ and any vector fields $X, Y$ on $M^{4n}$.

A  hyper-Hermitian manifold $(M^{4n}, J_1, J_2, J_3, h)$  is called HKT if and only if
\begin{equation} \label{HKTonforms}
J_1dF_1=J_2dF_2=J_3dF_3,
\end{equation}
where $F_r$ denotes the fundamental $2$-form associated to the Hermitian structure $(J_r, h)$ (see \cite{GP}).

Let us consider a $(4n + 3)$-dimensional manifold  $N^{4n+3}$
endowed with three almost contact metric structures $(I_r,\,\xi_r,
\,\eta_r, g)$,  $r=1,2,3$,  such that
\begin{equation} \label{cond-3metric}
\begin{array}{l}
I_k=I_iI_j - \eta_j\otimes\xi_i=-I_jI_i + \eta_i\otimes\xi_j,\\[4pt]
\xi_k=I_i\xi_j=-I_j\xi_i,\quad \eta_k=\eta_iI_j=-\eta_jI_i.
\end{array}
\end{equation}
By applying  Theorem \ref{non-t-circle-bundle}
we can construct    hyper-Hermitian structures on   $S^1$-bundles  over $N^{4n + 3}$ and study when they are strong  HKT.

\begin{theorem}\label{HKTcircle-bundle}
Let $N^{4n + 3}$ be a $(4n+3)$-dimensional manifold with three
normal almost contact metric structures $(I_r,\,\xi_r, \,\eta_r,
g)$, $r=1,2,3$, satisfying \eqref{cond-3metric}, and let $\Omega$ be
a closed $2$-form on $N^{4n+3}$ which represents an integral
cohomology class  and which is $I_r$-invariant for every $r =
1,2,3$.  Consider the circle bundle $S^1 \hookrightarrow P \to N^{4n
+ 3}$ with connection $1$-form $\theta$ whose curvature form is
$d\theta = \pi^*(\Omega)$, where $\pi: P \to N$ is the projection.
Then, the hyper-Hermitian structure $(J_1, J_2, J_3, h)$ on $P$,
defined by~\eqref{acxS1}  and~\eqref{acxS2-1}, is {\rm HKT} if and
only  if
\begin{equation} \label{HKTS"1bundleconditions}
\begin{array}{l}
\pi^* (I_1 (d \omega_1))-  \pi^* (d \eta_1) \wedge \pi^* \eta_1 =\pi^* (I_2 (d \omega_2)) - \pi^* (d \eta_2) \wedge \pi^* \eta_2\\[4pt] = \pi^* (I_3 (d \omega_3))- \pi^* (d \eta_3) \wedge \pi^* \eta_3,\\[9pt]
\pi^* (I_1 (i_{\xi_1} d \omega_1))= \pi^* (I_2 (i_{\xi_2} d \omega_2))
= \pi^* (I_3 (i_{\xi_3} d \omega_3)),
\end{array}
\end{equation}
 where $\omega_r$ denotes the  fundamental form of the almost contact structure  $(I_r,\,\xi_r,
\,\eta_r, g)$.
 Moreover, the {\rm HKT} structure is strong if and only if
  \begin{equation} \label{HKTstrongS2-2}
\begin{array} {l}
d(\pi^* (I_r(i_{\xi_r} d \omega_r))) =0,\\
d(\pi^* (I_r(d \omega_r)- d \eta_r \wedge \eta_r)) = \left(\pi^* (- I_r(i_{\xi_r} d \omega_r))+
\pi^* \Omega\right)\wedge  \pi^* \Omega,
\end{array}
\end{equation}
for every $r = 1,2,3$.
\end{theorem}

\begin{proof}
 The almost hyper-Hermitian structure $(J_1, J_2, J_3, h)$ on $P$,
 defined by~\eqref{acxS1} and~\eqref{acxS2-1},
 is  hyper-Hermitian if and only  $(I_r,\,\xi_r,
\,\eta_r, g)$  is normal and   $d\theta$ is $J_r$-invariant   for
every $r = 1,2,3$. The  HKT condition is equivalent
to~\eqref{HKTonforms}.
 By~\eqref{expressionJdF}  we  have
$$
J_r dF_r =  \pi^*(I_r(d \omega_r)) + \pi^*(I_r (i_{\xi_r} d \omega_r))\wedge
\theta - \pi^*(d \eta_r) \wedge \pi^* \eta_r  - \theta \wedge d  \theta,
$$
where $F_r$ is the fundamental $2$-form of $(J_r, h)$.
Therefore, the condition \eqref{HKTonforms} is satisfied if and only if  \eqref{HKTS"1bundleconditions}
holds.
Finally, $J_r dF_r $ are closed forms if and only if  \eqref{HKTstrongS2-2}
holds.
\end{proof}

Consider  on $N^{4n + 3} \times \R$ the almost Hermitian structures
$(J_r, F_r, h)$ defined by
\begin{equation}
\label{productfundformHKT} h = g + (dt)^2, \quad F_r=\omega_r +
\eta_r\wedge dt,
\end{equation}
 and
\begin{equation} \label{Jrproduct}
J_r(\eta_r)=dt,\quad J_r(X)=I_r(X),\,X\in {\mbox {Ker}} \, \eta_r.
\end{equation}

Moreover, by \eqref{cond-3metric} we have:
$$
\begin{array}{l}
J_1 J_2 = J_3 = - J_2 J_1,\\[4pt]
J_1\eta_2=I_1\eta_2=-\eta_3,\quad J_2\eta_3=I_2\eta_3=-\eta_1,\quad
J_3\eta_1=I_3\eta_1=-\eta_2.
\end{array}$$

Therefore $(J_r,  F_r, h)$, $r = 1,2,3$,  is a  hyper-Hermitian
structure on  $N^{4n + 3} \times \R$ if and only if the structures
$(I_r,\,\xi_r, \,\eta_r)$ for $r=1,2,3$ are normal.

\begin{corollary} \label{HKTproduct}  Let $N^{4n + 3}$ be a $(4n + 3)$-dimensional  manifold endowed with three  normal almost contact metric structures
$(I_r,\,\xi_r, \,\eta_r, g)$, $r=1,2,3$.  Consider  on the  product
$N^{4n + 3} \times \R$  the    hyper-Hermitian structure  $(J_1,
J_2, J_3, h)$ defined  by~\eqref{productfundformHKT}
and~\eqref{Jrproduct}. Then, $(J_1, J_2, J_3, h)$ is {\rm HKT} if
and only  if 
$$
\begin{array}{c}
I_1 (d \omega_1)- d \eta_1 \wedge  \eta_1 = I_2 (d \omega_2) -  d \eta_2 \wedge  \eta_2 = I_3 (d \omega_3) -  d \eta_3 \wedge  \eta_3,\\[9pt]
I_1 (i_{\xi_1} d \omega_1)=  I_2 (i_{\xi_2} d \omega_2)
=  I_3 (i_{\xi_3} d \omega_3).
\end{array}
$$
The HKT structure is strong if and only if 
$$
d(I_r(i_{\xi_r} d \omega_r)) =0, \quad
d(I_r(d \omega_r)- d \eta_r \wedge \eta_r) = 0
$$
for every $r = 1,2,3$. 

Moreover, if  $(J_1, J_2, J_3, h)$ is such that   $$
d \eta_1 \wedge \eta_1 = d \eta_2 \wedge \eta_2 = d \eta_3 \wedge
\eta_3,
$$
 and one of the following conditions:
\begin{enumerate}
\item[(a)] $d\omega_r=0$ for any $r=1,2,3$, i.e. $(I_r,\,\xi_r,\,\eta_r)$ is  quasi-Sasakian for any $r=1,2,3$
or
\item[(b)] $d\omega_i\wedge\eta_{j}\wedge\eta_{k}\neq 0$, where $(i,j, k)$ is a permutation of $(1,2,3)$,  and
$$
I_1(d\omega_1)=I_2(d\omega_2)=I_3(d\omega_3),\quad
I_1(i_{\xi_1}d\omega_1)=I_2(i_{\xi_2}d\omega_2)=I_3(i_{\xi_3}d\omega_3),
$$
\end{enumerate}
is satisfied, 
then $(J_1, J_2, J_3, h)$ is {\rm HKT}.   In the case $(a)$ the {\rm HKT} structure is strong. In the case $(b)$ the
{\rm HKT} structure is strong if and only if
$$d\left(I_1(d\omega_1)\right)=d\left(I_1(i_{\xi_1}d\omega_1)\right)=0.$$ 
\end{corollary}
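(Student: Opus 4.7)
The natural approach is to specialize the calculation performed in the proof of Theorem \ref{non-t-circle-bundle} to the trivial case, where the $S^1$-bundle is replaced by the product $N^{4n+3} \times \mathbb{R}$: here $\theta$ plays the role of $dt$ and $d\theta$ vanishes. Formula \eqref{expressionJdF} then specializes to
$$
J_r\,dF_r = I_r(d\omega_r) + I_r(i_{\xi_r}d\omega_r)\wedge dt - d\eta_r \wedge \eta_r, \qquad r = 1,2,3.
$$
This reduction is legitimate because the hyper-Hermitian nature of $(J_1,J_2,J_3,h)$ on the product follows from normality of each almost contact metric structure together with the algebraic compatibilities \eqref{cond-3metric}, as indicated in the paragraph preceding the statement.

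For the first two equivalences, the decisive observation is that $I_r(d\omega_r) - d\eta_r\wedge\eta_r$ is a $3$-form on $N^{4n+3}$ while $I_r(i_{\xi_r}d\omega_r)\wedge dt$ is the $dt$-component. Consequently, the HKT identity $J_1\,dF_1 = J_2\,dF_2 = J_3\,dF_3$ decouples into the two independent equalities stated in the corollary. For the strong HKT condition, since $d(dt)=0$ and the forms on $N^{4n+3}$ are $t$-independent,
$$
d(J_r\,dF_r) = d_N\bigl(I_r(d\omega_r) - d\eta_r\wedge\eta_r\bigr) + d_N\bigl(I_r(i_{\xi_r}d\omega_r)\bigr) \wedge dt,
$$
where $d_N$ denotes the exterior derivative on $N^{4n+3}$. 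Closedness is then equivalent to the simultaneous vanishing of both summands, giving the two strong conditions.

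For the "moreover" assertion, the plan is to verify that each sufficient condition directly implies the HKT conditions just derived. In case (a), $d\omega_r=0$ kills both $I_r(d\omega_r)$ and $I_r(i_{\xi_r}d\omega_r)$, so the HKT condition collapses to the shared equality of $d\eta_r\wedge\eta_r$, which is assumed. In case (b), the two pairs of equalities given are exactly the HKT conditions, and the non-vanishing assumption $d\omega_i\wedge\eta_j\wedge\eta_k\neq 0$ is used to ensure that the common value of $d\eta_r\wedge\eta_r$ is consistent with the equality of the $I_r(d\omega_r)$'s (i.e.\ to rule out the system being trivially satisfied by different distributions of horizontal/vertical parts). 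The strong HKT claims follow by applying $d$ to these simplified expressions: in case (a), $d(J_r dF_r)=-d(d\eta_r\wedge\eta_r)=-d\eta_r\wedge d\eta_r$, and in case (b) one reads off the stated necessary and sufficient conditions.

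The main obstacle I anticipate is the strong HKT conclusion in case (a): the naive computation requires $d\eta_r\wedge d\eta_r=0$, so one must check carefully whether this vanishing actually follows from the combined hypotheses (three compatible quasi-Sasakian structures with common $d\eta_r\wedge\eta_r$) or whether the statement tacitly incorporates an additional vanishing that should be made explicit; this is the step where I would invest the most care to either produce a rigorous argument from the compatibilities \eqref{cond-3metric} or to exhibit an additional necessary hypothesis.
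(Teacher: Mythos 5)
Your derivation of the first two equivalences is correct and matches the paper's route: both reduce the product case to the formula $J_r\,dF_r = I_r(d\omega_r) - d\eta_r\wedge\eta_r + I_r(i_{\xi_r}d\omega_r)\wedge dt$ and then split the HKT identity and the closedness condition according to the presence or absence of $dt$. (The first equivalence is in fact just Theorem~\ref{HKTcircle-bundle} with $\Omega=0$, as the paper notes.)

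The genuine gap is exactly where you flagged it: the strongness claim in case (a), and likewise the reduction of the strong condition in case (b), both require $d\eta_r\wedge d\eta_r=0$, and your proposal neither proves this nor resolves whether it is an extra hypothesis. It is in fact a consequence of the HKT conditions together with normality and \eqref{cond-3metric}, but seeing this requires the detailed local decomposition that occupies most of the paper's proof. One writes
$d\omega_r=\alpha_r+\sum_i\beta^r_i\wedge\eta_i+\sum_{i<j}\gamma^r_{ij}\wedge\eta_i\wedge\eta_j+\rho_r\,\eta_1\wedge\eta_2\wedge\eta_3$ and, using $i_{\xi_r}d\eta_r=0$ and $I_r(d\eta_r)=d\eta_r$, $d\eta_1=A_1+B_1\wedge\eta_2-I_1B_1\wedge\eta_3+C_1\,\eta_2\wedge\eta_3$ (and cyclically), with all coefficient forms lying in $\bigcap_i\mathrm{Ker}\,\eta_i$. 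Comparing the three expansions of $J_r\,dF_r$ term by term, the HKT identity forces, among other relations, $A_1=I_2\beta^2_3=-I_3\beta^3_2$ (cyclically), $B_r=0$ up to the vanishing of the $\gamma$'s, and $C_1=-C_2=C_3$, so that $d\eta_i=A_i+\lambda\,\eta_j\wedge\eta_k$ with $A_i$ built from components of the $d\omega$'s. In case (a) all $\beta^r_i$ vanish, hence $A_i=0$ and $d\eta_i=\lambda\,\eta_j\wedge\eta_k$ is decomposable, giving $d\eta_i\wedge d\eta_i=0$; the same mechanism (taking $\beta^i_r=0$ for $r\neq i$) yields the stated reduction in case (b). Without this component analysis the "moreover" assertions do not follow, so your proof is incomplete at precisely the step you identified as the obstacle.
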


\begin{proof}
By Theorem  \ref{HKTcircle-bundle} the hyper-Hermitian structure $(J_r,  F_r, h)$, $r = 1,2,3$,  is HKT if and only if
\begin{equation}\label{HKTcondproduct}
\begin{array}{l}
I_1 (d \omega_1)-  d \eta_1 \wedge   \eta_1= I_2 (d \omega_2) -  d \eta_2 \wedge   \eta_2 = I_3 (d \omega_3) -  d \eta_3 \wedge   \eta_3,\\[6pt]
I_1 (i_{\xi_1} d \omega_1)  = I_2 (i_{\xi_2} d \omega_2)
= I_3 (i_{\xi_3} d \omega_3).
\end{array}
\end{equation}

Let us express locally \begin{equation} \label{exprdomegar}
d\omega_r=\alpha_r + \sum_{i=1}^3 \beta_i^r\wedge\eta_i+
\sum_{i<j=1}^3 \gamma^r_{ij}\wedge\eta_{i}\wedge\eta_j+
\rho_r\,\eta_{1}\wedge\eta_2\wedge\eta_3,
\end{equation}
where $\alpha_r,\,\beta_i^r$ and $\gamma^r_{ij}$ are 3-forms,
2-forms and 1-forms respectively in $\bigcap_{i = 1}^3\,{\mbox {Ker}}\,\eta_i$ and
$\rho_r$ are smooth functions.

By using the normality of the three almost contact metric
structures, and then that $i_{\xi_r}d\eta_r=0$ and
$I_r(d\eta_r)=d\eta_r$,  we can write locally:
\begin{equation} \label{exprdetar}
\begin{array}{lll}
d\eta_1&=&A_1 + B_1\wedge\eta_2 - I_1 B_1\wedge\eta_3 +
C_1\,\eta_{2}\wedge\eta_3,\\[5pt]
d\eta_2&=&A_2 + B_2\wedge\eta_1 + I_2 B_2\wedge\eta_3 +
C_2\,\eta_{1}\wedge\eta_3,\\[5pt]
d\eta_3&=&A_3 + B_3\wedge\eta_1 - I_3 B_3\wedge\eta_2 +
C_3\,\eta_{1}\wedge\eta_2,
\end{array}
\end{equation}
where $I_rA_r=A_r$.  $A_r$ and $B_r$ are 2-forms and 1-forms
respectively in $\bigcap_{i = 1}^3\,{\mbox {Ker}}\,\eta_i$ and $C_r$ are smooth
functions.

We have
 $$
 J_r(dF_r)=J_r(d\omega_r) + J_r(d\eta_r\wedge dt) = J_r(d\omega_r) -d\eta_r\wedge\eta_r.
 $$
Therefore, by using \eqref{exprdomegar} and \eqref{exprdetar}, we
obtain
 \begin{eqnarray*}
J_1(dF_1)&=&I_1\alpha_1 + I_1\beta^1_1\wedge dt- A_1\wedge\eta_1-
I_1\beta^1_3\wedge\eta_2 - I_1\beta^1_2\wedge\eta_3\\[5pt]&&  - I_1\gamma^1_{13}\wedge\eta_2\wedge dt
+I_1\gamma^1_{12}\wedge\eta_3\wedge dt +
B_1\wedge\eta_{1}\wedge\eta_2 -
I_1B_1\wedge\eta_{1}\wedge\eta_3\\[5pt] && + I_1\gamma^1_{23}\wedge\eta_{2}\wedge\eta_3 + \rho_1\,\eta_{2}\wedge\eta_3\wedge dt -
C_{1}\,\eta_{1}\wedge\eta_2\wedge\eta_3,
\end{eqnarray*}
\begin{eqnarray*}
J_2(dF_2)&=&I_2\alpha_2 + I_2\beta^2_2\wedge dt-
I_2\beta^2_3\wedge\eta_1-
A_2\wedge\eta_2 + I_2\beta^2_1\wedge\eta_3\\[5pt]&&  + I_2\gamma^2_{23}\wedge\eta_1\wedge dt
+I_2\gamma^2_{12}\wedge\eta_3\wedge dt -
B_2\wedge\eta_{1}\wedge\eta_2 +
I_2\gamma^2_{13}\wedge\eta_{1}\wedge\eta_3\\[5pt] && + I_2B_2\wedge\eta_{2}\wedge\eta_3 - \rho_2\,\eta_{1}\wedge\eta_3\wedge dt +
C_{2}\,\eta_{1}\wedge\eta_2\wedge\eta_3,
\end{eqnarray*}
\begin{eqnarray*}
J_3(dF_3)&=&I_3\alpha_3 + I_3\beta^3_3\wedge dt+
I_3\beta^3_2\wedge\eta_1- I_3\beta^3_1\wedge\eta_2-
A_3\wedge\eta_3 \\[5pt]&&  + I_3\gamma^3_{23}\wedge\eta_1\wedge dt
-I_3\gamma^3_{13}\wedge\eta_2\wedge dt
+I_3\gamma^3_{12}\wedge\eta_{1}\wedge\eta_2
-B_3\wedge\eta_{1}\wedge\eta_3
\\[5pt] && + I_3B_3\wedge\eta_{2}\wedge\eta_3 + \rho_3\,\eta_{1}\wedge\eta_2\wedge dt - C_{3}\,\eta_{1}\wedge\eta_2\wedge\eta_3.
\end{eqnarray*}

The conditions  \eqref{HKTcondproduct} are satisfied if and
only if
\begin{equation}
\begin{array}{l} \label{genconHKT}
\gamma^1_{12}=\gamma^1_{13}=\gamma^2_{12}=\gamma^2_{23}=\gamma^3_{13}=\gamma^3_{23}=0,\quad
\rho_r=0,\quad
C_{1}=-C_{2}=C_{3},\\[5pt]
I_1\alpha_1=I_2\alpha_2=I_3\alpha_3,\quad
I_1\beta^1_1=I_2\beta^2_2=I_3\beta^3_3,\\[5pt]
A_1=I_2\beta^2_3=-I_3\beta^3_2,\quad A_2=-I_1\beta^1_3=I_3\beta^3_1,\quad A_3=I_1\beta^1_2=-I_2\beta^2_1,\\[5pt]
B_1=-B_2=I_3\gamma^3_{12},\quad -I_1B_1=-B_3=I_2\gamma^2_{13},\quad
I_2B_2=I_3B_3=I_1\gamma^1_{23}.\end{array}\end{equation}
\vspace{.1cm}

Since $I_rA_r=A_r$ we obtain that the coefficients $\beta^r_i$ for $r\neq i=1,2,3$ must satisfy the following conditions:
$$I_i\left(\beta^i_j - I_k\beta^i_j\right)=0,\quad\forall i,j,k=1,2,3,\quad i\neq j,\,\, j\neq k,\,\, k\neq i.$$ The last three equations in \eqref{genconHKT} are
satisfied if and only if
$\gamma^1_{23}=\gamma^2_{13}=\gamma^3_{12}=0$.

Thus, finally, we obtain:
\begin{equation}
\begin{array}{l} \label{finalconHKT}
d\omega_r=\alpha_r + \sum_{i=1}^3\beta^r_i \wedge \eta_i,\quad d\eta_i=A_i +\lambda\, \eta_{j}\wedge\eta_{k},\\[6pt]
I_i\left(\beta^i_j - I_k\beta^i_j\right)=0,\quad\forall i,j,k=1,2,3,\quad i\neq j,\,\, j\neq k,\,\, k\neq i,\\[6pt]
I_1\alpha_1=I_2\alpha_2=I_3\alpha_3,\\[6pt]
A_1=I_2\beta^2_3=-I_3\beta^3_2,\quad A_2=-I_1\beta^1_3=I_3\beta^3_1,\quad A_3=I_1\beta^1_2=-I_2\beta^2_1.
 \end{array}
 \end{equation} 
  
  for any even permutation of $(1,2,3)$.

Now, the expression for $d(J_1dF_1)$ is the following:
$$
\begin{array}{lll}
d(J_1dF_1)
&=&d\left(I_1(d\omega_1) + I_1({i_{\xi}}_1d\omega_1)\wedge dt\right) - d\left((d\eta_1)\wedge\eta_1\right)\\[5pt]
&=&d\left(I_1(d\omega_1)\right) + d\left(I_1({i_{\xi}}_1d\omega_1)\right)\wedge dt - d\eta_1\wedge d\eta_1\\[5pt]
&=&d\left(I_1(d\omega_1)-d\eta_1\wedge\eta_1\right) + d\left(I_1({i_{\xi}}_1d\omega_1)\right)\wedge dt,\end{array}
$$
and thus the HKT structure is strong if and only if $$d (I_1
(d\omega_1)-d\eta_1\wedge\eta_1) =0,\quad \text{and}\quad d(I_1({i_{\xi}}_1d\omega_1))=0.$$ To prove the last part of the corollary  it is sufficient to consider coefficients $\beta^i_r=0$ if $r\neq i$ in expression \eqref{exprdomegar}.

 \end{proof}

\begin{example} {\rm  Consider the $7$-dimensional Lie group $G=$ SU(2) $\ltimes \R^4$ with structure equations
$$\begin{cases}
\begin{array} {l}
d e^1 = - \frac{1}{2} e^{25} - \frac{1}{2} e^{36} - \frac{1}{2} e^{47},\\[4pt]
d e^2 =  \frac{1}{2} e^{15}+ \frac{1}{2} e^{37} - \frac{1}{2} e^{46},\\[4pt]
d e^3 = \frac{1}{2} e^{16}- \frac{1}{2} e^{27} + \frac{1}{2} e^{45},\\[4pt]
d e^4 =  \frac{1}{2} e^{17}+ \frac{1}{2} e^{26} - \frac{1}{2} e^{35},\\[4pt]
d e^5 = e^{67},\\
d e^6 = - e^{57},\\
d e^7 = e^{56}.
\end{array}\end{cases}
$$
By~\cite{FT} $G$ admits a compact quotient $M^7 = \Gamma \backslash
G$  by a uniform discrete subgroup  $\Gamma$ and it is endowed with
a weakly generalized $G_2$-structure. Moreover, by \cite{BF} $M^7
\times S^1$ admits a strong HKT structure.  We can show that $M^7$
has three normal almost contact metric structures  $(I_r,\,\xi_r,
\,\eta_r, g)$ for $r=1,2,3$ given by
$$
\begin{array}{l}
I_1 e^1 =  e^2, \quad  I_1 e^3 = e^4, \quad  I_1 e^5 = e^6, \quad \eta_1 = e^7 ,\\[4pt]
I_2 e^1 = e^3, \quad  I_2 e^2 =  -e^4, \quad  I_2e^5 = - e^7, \quad \eta_2 = e^6,\\[4pt]
I_3 e^1 =  e^4, \quad  I_3 e^2 =  e^3, \quad  I_3 e^6 =  e^7, \quad
\eta_3 = e^5,
\end{array}
$$
 satisfying the conditions $(a)$ of  Corollary~\ref{HKTproduct}.

}
\end{example}

\bigskip

\bigskip

\medskip

\noindent {\bf Acknowledgments.} This work has been partially
supported through Project MICINN (Spain) MTM2008-06540-C02-01/02,
Project MIUR ``Riemannian Metrics and Differentiable Manifolds"  and
by GNSAGA of INdAM.

\smallskip

{\small

\end{document}